\subjclass{55Q52}
\newtheorem{thm}{Theorem}[section]
\newtheorem*{thm*}{Theorem}
\newtheorem{cor}[thm]{Corollary}
\newtheorem{lem}[thm]{Lemma}
\newtheorem*{lem*}{Lemma}
\theoremstyle{definition}
\newtheorem*{defn*}{Definition}
\newtheorem*{defi}{Definition}
\theoremstyle{remark}
\newtheorem*{rem*}{Remark}
\numberwithin{equation}{section}
\newcommand{\inv}[1]{#1^{-1}}
\newcommand{\bb}[1]{\mathbb{#1}}
\newcommand{\T}{\intercal}
\newcommand{\R}{\mathbb R}
\newcommand{\Ima}{\text{Im }}
\title[Homotopy groups of quasi-spheres and applications]{Homotopy groups of quasi-spheres and applications to indefinite orthogonal groups}
\author{Xiangjia Kong, Reese Lance, and Franklin Rea}
\begin{document}
\maketitle
\begin{abstract}In this note, we present a new proof of the isomorphism $\pi_1(SO^+(p,q)) \cong \pi_1(SO(p))\times \pi_1(SO(q))$ using the long exact sequence associated to a fibration. While this formula is already known, the method of proof presented here fills an existing hole in the literature to naturally generalize an approach following the classical reduction of this formula for $q=0$, involving the homotopy groups of the $(p-1)$-sphere and a long exact sequence arising from a fibration constructed from these spaces. To generalize for $q \ne 0$, we upgrade spheres to the corresponding quasi-spheres, then analyze the resulting long exact sequence to obtain the isomorphism for all $p,q$. Some low-dimensional inductive steps are delicate in this approach, but it is more direct than other standard methods and involves some interesting calculations with fundamental groups. The question of whether or not this approach would work is absolutely natural, and in this paper we show that the answer is yes. This particular approach avoids some difficult technical steps in other standard proofs, which we review at the end. The family of spaces we consider are also of interest in physical applications. We remark about this direction to provide some context, but do not explore it in depth in this paper.
\end{abstract}

\section{Introduction}

As motivation, we first outline one standard calculation of $\pi_1(SO(n))$. Consider the fibration 
\[
SO(n-1) \to SO(n)\to S^{n-1}
\]
with inclusion and action by restricting the fundamental representation. This induces a long exact sequence on homotopy groups
\[
\cdots \to \pi_{2}(S^{n-1}) \to \pi_1(SO(n-1)) \to \pi_1(SO(n)) \to \pi_1(S^{n-1}) \to \cdots.
\]
Computing the higher homotopy groups of $SO(n)$ with this approach would involve computing terms such as $\pi_k(S^n)$, which is generally not known. However, for $\pi_1(SO(n))$ we only need to look at the portion of the LES depicted above, for which all the values are known: $\pi_k(S^n) = 0$ for $k < n$ and $\pi_k(S^n) = \bb Z$ for $k = n$. The only remaining sector is $k>n$, which remains open. We conclude that for $n> 3$, the terms on the left and right of the LES are $0$, so that $\pi_1(SO(n)) \cong \pi_1(SO(n-1))$, thus the fundamental group stabilizes at $n=3$. There are standard isomorphisms of spaces inducing $\pi_1(SO(3)) \cong \pi_1(\bb R \bb P^3) \cong \bb Z/2\bb Z$ and $\pi_1(SO(2)) \cong \pi_1(S^1) \cong \bb Z$. 
So we have
\[
\pi_1(SO(n)) = \begin{cases}
0& n = 1 \\ 
\bb Z& n = 2 \\
\bb Z/2 \bb Z & n\geq 3.
\end{cases}
\]
Inspired by this strategy, we wish to do something similar to compute the fundamental group of $SO^+(p,q)$. In Section 3, we introduce the family of spaces known as quasi-spheres, then compute their fundamental groups using a LES from a fibration, and provide a different approach to show this.  These quasi-spheres are of interest independent of their use for our applications here, particularly for physicists. In Section 4, we construct an analogous fibration to the one given for $SO(n)$, replacing (definite) special orthogonal groups with indefinite special orthogonal groups and promoting spheres to quasi-spheres. We remark in this section how the spaces investigated in this paper arise when doing physics, but we do not investigate this angle in depth in this paper. From the fibration, we employ the resulting long exact sequences to prove $\pi_1(SO^+(p,q)) \cong \pi_1(SO(p)) \times \pi_1(SO(q))$. For comparison, we provide two standard calculations of $\pi_1(SO^+(p,q))$ in Section 5. These involve non-trivial steps: The calculation of maximal compact subgroups, $K$, for $G = SO^+(p,q)$, the calculation of the corresponding symmetric spaces $G/K$, and appeals to general decomposition theorems, whose proofs we do not provide. One advantage of the approach taken in this note is to avoid these intensive steps. It is of interest to see how the usage of exact sequences, via some special considerations, result in the ultimate (classical) formula. Having seen the calculation of $\pi_1(SO(n))$ provided above, it is absolutely natural to ask if the approach we describe in this paper would work for $SO^+(p,q)$, and here we show that the answer is yes. Likely, attempts in this direction were abandoned due to ramification of some low-dimensional steps, which we will show. It is interesting to see that in spite of these exceptional cases being quite complicated, in the end one obtains a very simple and elegant formula. 
\section{Acknowledgements}
The first author was supported in part by the NSF, grant DMS-1555206. All three authors thank Ivan Cherednik for posing this question and his help with this note. The support of his NSF grant DMS-1901796 is acknowledged. 
\section{Homotopy groups of quasi-spheres}
\begin{defi}The $n$-dimensional \textit{quasi-sphere} is defined by
\[
    X_{p,q}^{\pm} = \{x \in \bb R^{n+1}\ |\ x_1^2 + \dots + x_p^2 - x_{p+1}^2 - \dots - x_{p+q}^2 = \pm 1\}.
\]
\end{defi}
It is also realized as the unit sphere in $\bb R^{p,q}$, whose underlying linear space is $\bb R^{p+q+1}$, equipped with the standard quadratic form of signature $(p,q)$ in the case of $+$ and $(q,p)$ in the case of $-$. Quasi-spheres themselves have been known to mathematicians and physicists for quite some time, such that there are now standard textbooks, see \cite{porteous}, \cite{vaz}. The definition we provide here is a particular instance of a more general definition, also taken in \cite{porteous}, \cite{vaz}, allowing for more general quadratic forms, but we do not investigate these generalizations here. 

\begin{rem*}Quasi-spheres are of physical interest, separate from the applications we explore here. The family of de Sitter and anti-de Sitter spaces, which fit into the very deep story of AdS/CFT correspondence and gravity at large, are realized as quasi-spheres: $dS$ corresponds to $X^-$ while AdS corresponds to $X^+$. Some authors define dS/AdS spaces to be the universal covers of the corresponding quasi-sphere, making their fundamental groups also of interest. For example, if any of these dS or AdS spaces happen to be simply connected, then they are their own universal cover.
\end{rem*}

We want to calculate some low dimensional homotopy groups of these quasi-spheres for use in the next section, using the long exact sequence associated to a fibration. At the end of this section, we provide an alternative, faster route to compute these groups, namely exhibiting a homeomorphism $X_{p,q}^+ \cong \bb R^q \times S^{p-1}$.\\

There is a natural projection
\begin{align*}
\pi: X_{p,q}^+ &\to \bb R^q\\
(x_1,\dots,x_p,x_{p+1},\dots,x_{p+q}) &\mapsto (x_{p+1},\dots,x_{p+q})
\end{align*}
out of which we want to build a Serre fibration. For an introductory treatment on the theory of fibrations, including how to construct a long exact sequence of homotopy groups associated to any fibration, see \cite{hatcher} chapter 4, section 2. We will use freely the basic definitions and results from this source. \\\\
The fibers of the above projection for a given $(z_1,\dots,z_n) \in \bb R^q$ are
\[
    \inv\pi(z_1,\dots,z_n) = \{x\in X_{p,q}^+\ |\ x_1^2+\dots+x_p^2 = 1+z_1^2 + \dots + z_{p+q}^2\},
\]
which is isomorphic to the sphere of radius $\sqrt{1+z_1^2 + \dots + z_{p+q}^2}$. This gives the fibration
\[
    S^{p-1} \hookrightarrow X_{p,q}^+ \to \bb R^q,
\]
\begin{rem*}One would show that what we wrote down is a fiber bundle, and it is a valuable exercise to work out local triviality conditions. It is known that every fiber bundle yields a Serre fibration, for example it is proved in \cite{hatcher}.
\end{rem*}
In our case, the long exact sequence provided is 
\[
\begin{tikzcd}
& \cdots            \arrow[r]           &                            \pi_3(\bb R^q) \ar[overlay, out=-10, in=170]{dll} &    \\
\pi_2(S^{p-1}) \arrow[r] & {\pi_2(X_{p,q}^+)} \arrow[r] & \pi_2(\bb R^q) \ar[overlay, out=-10, in=170]{dll}  &   \\
\pi_1(S^{p-1}) \arrow[r] & {\pi_1(X_{p,q}^+)} \arrow[r] & \pi_1(\bb R^q) \ar[overlay, out=-10, in=170]{dll}  &   \\
\pi_0(S^{p-1}) \arrow[r] & {\pi_0(X_{p,q}^+)} \arrow[r] & \pi_0(\bb R^q) \arrow[r]   & 0.
\end{tikzcd}
\]
We can simplify the above sequence using the following facts, whose proofs are also found in \cite{hatcher}:
\begin{itemize}
    \item $\pi_k(\bb R^n) \cong \pi_k(\{*\}) = 0$ for all $k \neq 0$.
    \item $\pi_1(S^n) = 0$ for all $n \geq 2$, $\pi_2(S^n) = 0$ for all $n \neq 2$ and $\pi_2(S^2) = \bb Z$.
\end{itemize}
Substituting in yields
\[
\begin{tikzcd}
& \cdots            \arrow[r]           &                            0 \ar[overlay, out=-10, in=170]{dll} &    \\
\pi_2(S^{p-1}) \arrow[r] & {\pi_2(X_{p,q}^+)} \arrow[r] & 0 \ar[overlay, out=-10, in=170]{dll}  &   \\
\pi_1(S^{p-1}) \arrow[r] & {\pi_1(X_{p,q}^+)} \arrow[r] & 0 \ar[overlay, out=-10, in=170]{dll}  &   \\
\pi_0(S^{p-1}) \arrow[r] & {\pi_0(X_{p,q}^+)} \arrow[r] & 0 \arrow[r]   & 0.
\end{tikzcd}
\]
In the zeroth degree, we have the short exact sequence
\[
0 \to \pi_0(S^{p-1}) \xrightarrow{f} \pi_0(X_{p,q}^+) \to 0 
\]
which implies the two non-zero sets\footnote{In general $\pi_0$ does not have a group structure.} are in bijection. $\pi_0(S^{p-1})$ is a set with cardinality equal to the number of path components of $S^{p-1}$, which is path-connected for $p \neq 1$ and has two path components for $p = 1$:
\[
    \pi_0(X_{p,q}^+) = \pi_0(S^{p-1}) = \begin{cases}
    \bb Z / 2 \bb Z & p = 1\\
    0 & p \ne 1.
        \end{cases}
\]
To compute the higher degree homotopy groups, we have the following cases for $p$:
\begin{itemize}
    \item[($p\geq4$)] We have $\pi_k(S^{p-1}) = 0$ for $k=0,1,2$, so the long exact sequence in the $k$-th degree will give us
\[
    0 \xrightarrow{f} \pi_k(X_{p,q}^+) \xrightarrow{g} 0.
\]
Exactness at the middle term implies $\pi_k(X_{p,q}^+) = \ker g = \Ima f = 0$, for $k = 0,1,2$ and $p\geq4$.\\

    \item[($p=3$)] We have 
\[
\begin{tikzcd}
                 & \cdots                       & 0 \ar[overlay, out=-10, in=170]{dll} \\
\bb Z  \arrow[r] & {\pi_2(X_{3,q}^+)} \arrow[r] & 0 \ar[overlay, out=-10, in=170]{dll} \\
0 \arrow[r]      & {\pi_1(X_{3,q}^+)} \arrow[r] & 0            
\end{tikzcd}
\]
from which we obtain the short exact sequence
\[
0 \to \bb Z \to \pi_2(X_{3,q}^+) \to 0
\]
which implies $\pi_2(X_{3,q}^+) \cong \bb Z$, following the same argument we made for $\pi_0(X)$. \\
We also obtain from above
\[
    0 \to \pi_1(X_{3,q}^+) \to 0
\]
which implies $\pi_1(X_{3,q}^+) =0$. So we conclude
\[
    \pi_k(X_{3,q}^+) = \begin{cases}
    \bb Z & k=2\\
    0 & k = 1 \text{ or } 0.
    \end{cases}
\]
    \item[($p=2$)] We have 
\[
\begin{tikzcd}
                 & \cdots                       & 0 \ar[overlay, out=-10, in=170]{dll} \\
0  \arrow[r] & {\pi_2(X_{2,q}^+)} \arrow[r] & 0 \ar[overlay, out=-10, in=170]{dll} \\
\bb Z \arrow[r]      & {\pi_1(X_{2,q}^+)} \arrow[r] & 0,       
\end{tikzcd}
\]
using the same argument as the case $p = 3$, we get the analogous result
\[
    \pi_k(X_{2,q}^+) = \begin{cases}
    0 & k = 2\\
    \bb Z & k = 1 \\
    0 & k = 0.
    \end{cases}
\]
    \item[($p=1$)] We have 
\[
\begin{tikzcd}
                 & \cdots                       & 0 \ar[overlay, out=-10, in=170]{dll} \\
0  \arrow[r] & {\pi_2(X_{1,q}^+)} \arrow[r] & 0 \ar[overlay, out=-10, in=170]{dll} \\
\bb Z / 2 \bb Z \arrow[r]      & {\pi_1(X_{1,q}^+)} \arrow[r] & 0       
\end{tikzcd}
\]
which gives
\[
    \pi_k(X_{1,q}^+) = \begin{cases}
    0 & k = 1 \text{ or } 2\\
    \bb Z / 2 \bb Z & k = 0.
    \end{cases}
\]
\end{itemize}
Putting everything together, 
\[
\pi_0(X^+_{p,q}) = \begin{cases}
0 & \text{if } p \neq 1\\
\mathbb{Z}/ 2 \bb Z & \text{if } p = 1\\
\end{cases}
, \quad \pi_1(X^+_{p,q}) = \begin{cases}
0 & \text{if } p \neq 2\\
\mathbb{Z} & \text{if } p = 2\\
\end{cases}
, \quad \pi_2(X^+_{p,q}) = \begin{cases}
0 & \text{if } p \neq 3\\
\mathbb{Z} & \text{if } p = 3.
\end{cases}
\]
One may notice that these resemble the homotopy groups of $S^{p-1}$. Indeed we could also calculate these homotopy groups by defining the map suggested in \cite{porteous} (modulo variation in notation)
\begin{align*}
\bb R^q \times S^{p-1} &\to X^+_{p,q}\\
(x,y) &\mapsto \Big((\sqrt{1+x\cdot x})y,x\Big)
\end{align*}
and it is elementary to show that this map is bijective, continuous and with continuous inverse. From this it follows that $\pi_n(X_{p,q}^+) \cong \pi_n(S^{p-1})$ via a deformation retract. 
\section{The fundamental group of $SO^+(p,q)$ via long exact sequences and fibrations}
The introductory definitions and results from this section are covered in, for example, \cite{porteous}, but the notation used there is quite different from ours, so we repeat some of the relevant items here. \\\\
\begin{defi}The \textit{indefinite special orthogonal group}, $SO(p,q)$, is the subgroup of $GL_n(\bb R )$ which preserves the symmetric bilinear form given by the matrix
\[
I_{p,q}= \text{diag}(\underbrace{1,\dots,1}_p, \underbrace{-1,\dots,-1}_{q}).
\]
Explicitly it is 
\[
SO(p,q) = \big\{A \in GL_n(\bb R ) : \inv A = I_{p,q} A^\intercal I_{p,q}, \; \det A =  1\big\},
\]
and the analogy to the classical group $SO(n)$ is clear.
\end{defi}
We want to establish the fibration 
\begin{equation}
    SO(p-1,q) \hookrightarrow SO(p,q) \to X_{p,q}^+
\end{equation}
in order to calculate the fundamental groups via the resulting long exact sequence. There is an inclusion map given by
\begin{align*}
    i : SO(p-1,q) &\hookrightarrow SO(p,q)\\
    A &\mapsto \begin{pmatrix} 1 & 0\\ 0& A \end{pmatrix},
\end{align*}
the image is contained in $SO(p,q)$ since
\[
    i(A) I_{p,q} i(A)^\intercal = \begin{pmatrix}1 & 0\\0 & A \end{pmatrix} \begin{pmatrix} 1 & 0\\0 & I_{p-1,q} \end{pmatrix}  \begin{pmatrix}1 & 0\\0 & A^\intercal \end{pmatrix} = \begin{pmatrix} 1 & 0\\0 & I_{p-1,q} \end{pmatrix} = I_{p,q}.
\]
There is also a map
\begin{align*}
    \pi: SO(p,q) &\to X_{p,q}^+\\
    A &\mapsto A e_1 \equiv A_1
\end{align*}
where $\{e_i\}$ is the standard basis of $\R^n$ and $A_1$ denotes the first column of $A$. To show the image is contained in $X_{p,q}^+$, we first decompose $A$ into blocks
\[
A = \begin{pmatrix} B & C \\ D & E \end{pmatrix},
\]
where $B,C,D,E$ are $p\times p, q\times p, p\times q, q\times q$ matrices respectively. Then by the definition of $SO(p,q)$, we have
\begin{equation}
A^{-1} = I_{p,q} A^\T  I_{p,q} = \begin{pmatrix}
B^\intercal & -D^\intercal\\
-C^\intercal & E^\intercal
\end{pmatrix}. 
\end{equation}
Write the first column of $A$ as 
\[
A_1 = \begin{pmatrix}
b_1 & \cdots & b_p & d_1 & \cdots & d_q
\end{pmatrix}^\T
\]
$(4.2)$ then gives the first row of $A^{-1}$ as 
\[
(A^{-1})^1 = \begin{pmatrix}
b_1&\dots&b_p&-d_1&\dots& -d_q
\end{pmatrix}
\]
so the entry in the first row and first column of $A^{-1}A = I$ is
\[
1 = (I)_{11} = (A^{-1}A)_{11} = (A^{-1})^1 A_1 = b_1^2 + \dots + b_p^2 - d_1^2 - \dots - d_q^2
\]
which implies that $\pi(A)=A_1 \in X^+_{p,q}$ as a vector in $\mathbb{R}^{p+q}$.

Now we calculate the fiber $\inv \pi (x)$ for some $x \in X_{p,q}^+$. By definition,
\[
    \inv \pi (x) = \big\{A \in SO(p,q) : A e_1 = x \big\}
\]
First $\inv\pi (e_1)$ consists of matrices with $e_1$ as its first column, so if $A \in \inv\pi (e_1)$, i.e. $A e_1 = e_1$, then we have $\inv A e_1 = e_1$, which means
\[
e_1 = I_{p,q} e_1 = I_{p,q} \inv A e_1 = A^\T  I_{p,q} e_1 = A^\T e_1 = (A^\T)_1.
\]
Thus elements of $\inv \pi(e_1)$ are exactly 
\[
A = \begin{pmatrix} 1 & 0 \\ 
0 & A' \end{pmatrix} = i(A') \in SO(p,q)
\]
for some $A' \in SO(p-1,q)$, so that $\inv\pi(e_1) \cong SO(p-1,q)$.  \\\\
\begin{lem}\textit{For any $x \in X_{p,q}^+$, there exists a matrix $A_x \in SO(p,q)$ such that $A_x(e_1) = x$ (and thus the action of $SO(p,q)$ on $X_{p,q}^+$ is transitive).}
\end{lem}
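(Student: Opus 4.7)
My plan is to construct $A_x$ explicitly as a product of at most three factors already known to lie in $SO(p,q)$: a rotation in the first $p$ coordinates, a rotation in the last $q$ coordinates (both arising from the block-diagonal embedding $SO(p) \times SO(q) \hookrightarrow SO(p,q)$), and a single hyperbolic ``boost'' mixing the $x_1$ and $x_{p+1}$ axes. Write $x = (u, v) \in \R^p \times \R^q$; the defining relation $|u|^2 - |v|^2 = 1$ forces $|u| \geq 1$, so in particular $u \neq 0$.

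First, I would pick $C \in SO(p)$ with $Cu = (|u|, 0, \dots, 0)^\T$, which exists by transitivity of $SO(p)$ on $S^{p-1}$ (valid for $p \geq 2$); similarly, if $v \neq 0$ and $q \geq 2$, pick $B \in SO(q)$ with $Bv = (|v|, 0, \dots, 0)^\T$. Second, I would introduce the boost
\[
L_t = \begin{pmatrix} \cosh t & 0 & \sinh t & 0 \\ 0 & I_{p-1} & 0 & 0 \\ \sinh t & 0 & \cosh t & 0 \\ 0 & 0 & 0 & I_{q-1} \end{pmatrix},
\]
which lies in $SO(p,q)$ by a direct check using $\cosh^2 t - \sinh^2 t = 1$. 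Setting $\cosh t = |u|$ and $\sinh t = |v|$ (solvable in $t \in \R$ thanks to the quasi-sphere constraint), one verifies $L_t \, e_1 = (|u|, 0, \dots, 0, |v|, 0, \dots, 0)^\T$, and then $A_x := \mathrm{diag}(C, B) \cdot L_t$ satisfies $A_x e_1 = x$. Since $\mathrm{diag}(C, B)$ and $L_t$ both lie in $SO(p,q)$, so does the product.

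The main obstacle I anticipate is the handling of delicate low-dimensional edge cases, which parallels the care already taken earlier in the paper. When $v = 0$ (in particular when $q = 0$), no boost is needed and $A_x = \mathrm{diag}(C, I_q)$ reduces to the classical transitivity of $SO(p)$ on $S^{p-1}$. When $q = 1$, the trivial group $SO(1)$ cannot flip the sign of the single coordinate of $v$, but the sign can be absorbed by allowing $\sinh t$ to carry the sign of $v$. When $p = 1$, the constraint forces $v = 0$ and $u = \pm 1$; the case $u = -1$ with $q \geq 1$ is handled by $\mathrm{diag}(-1, -1, I_{q-1}) \in SO(1, q)$. The genuinely degenerate case $p = 1, q = 0$ is tacitly excluded, since there $X_{1,0}^+ = \{\pm 1\}$ but $SO(1,0) = \{1\}$. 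With these cases checked directly and assembled with the generic construction above, one produces the required $A_x$ for every $x \in X_{p,q}^+$, and transitivity follows.
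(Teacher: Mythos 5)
Your construction is a genuinely different route from the paper's. The paper's proof builds $A_x$ column by column via a Gram-Schmidt-type argument over the indefinite form, using Lemma~4.2 (the characterization of $O(p,q)$ by columns of prescribed signature) to justify each step; at the end, a column swap fixes the determinant. Your approach instead produces $A_x$ directly as a product of a block rotation in $SO(p)\times SO(q)$ and a single hyperbolic boost $L_t$, which is essentially a hands-on instance of the Cartan/polar factorization that the paper defers to Section~5. This has the pedagogical virtue of being completely explicit and avoiding Lemma~4.2 entirely, at the cost of more case-checking in low dimensions.

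Two things need repair before the argument stands. First, a minor slip of orientation: you define $C$ by $Cu=(|u|,0,\dots,0)^{\T}$ and $B$ by $Bv=(|v|,0,\dots,0)^{\T}$, but then set $A_x=\mathrm{diag}(C,B)\,L_t$. For $A_x e_1=x$ you need $\mathrm{diag}(C,B)$ to send $(|u|,0,\dots,0,|v|,0,\dots,0)^{\T}$ to $(u,v)$, so the product should be $\mathrm{diag}(C^{-1},B^{-1})\,L_t$ (equivalently, choose $C,B$ so that $C e_1 = u/|u|$ and $B e_1 = v/|v|$). Second, and more substantively, the $p=1$ case is analyzed incorrectly: the constraint $u^2 - |v|^2 = 1$ does \emph{not} force $v=0$ when $p=1$ and $q\ge 1$; it only forces $u=\pm\sqrt{1+|v|^2}$, with $v\in\mathbb{R}^q$ arbitrary (e.g., $(\sqrt{2},1)\in X^+_{1,1}$). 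Consequently your treatment of $p=1$ covers only the isolated points $(\pm 1,0)$, whereas the genuine difficulty is the entire sheet $u<0$ with $v$ arbitrary: there is no $C\in SO(1)$ to supply the sign of $u$. This is fixable within your framework --- for instance, for $u<0$ set $\cosh t=|u|$, choose $B\in SO(q)$ with $B(-\sinh t\cdot e_1)=v$, and insert the factor $\mathrm{diag}(-1,-1,I_{q-1})\in SO(1,q)$ to flip both the first coordinate and the boost direction --- but as written this case is not handled. (For what it's worth, the paper's own proof also brushes past the fact that for $p=1,q=0$ the statement literally fails for $x=-1$, since $SO(1)=\{1\}$ cannot reach it; you flag this explicitly, which is more careful than the original.)
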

To prove this, we first need a lemma characterizing the columns of a matrix in $O(p,q)$, generalizing the result that the columns of an orthogonal matrix form an orthonormal set with respect to the dot product:\\
\begin{lem}\textit{$A \in O(p,q)\iff $ the columns of $A$ form an orthogonal set with respect to the bilinear form of signature $(p,q)$ and there are $p$ columns of $(p,q)$ magnitude $1$ and $q$ columns of $(p,q)$ magnitude $-1$. }
\end{lem}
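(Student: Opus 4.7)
The plan is to reduce the lemma to a direct matrix computation by recasting membership in $O(p,q)$ as the single equation $A^\T I_{p,q} A = I_{p,q}$, then reading this equation off entry-by-entry in terms of the columns of $A$. To set this up, I would start from the defining relation $A^{-1} = I_{p,q} A^\T I_{p,q}$ (taken from the preceding definition), left-multiply by $A$, and use $I_{p,q}^2 = I$ to obtain the symmetric form $A^\T I_{p,q} A = I_{p,q}$; every step is reversible, so this is an equivalent characterization of $O(p,q)$.

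Writing $A$ in terms of its columns $A_1, \dots, A_{p+q}$, standard matrix multiplication gives
\[
(A^\T I_{p,q} A)_{ij} = A_i^\T I_{p,q} A_j = \langle A_i, A_j \rangle_{p,q},
\]
the $(p,q)$-bilinear form evaluated on the pair of columns. Comparing entry-by-entry with $I_{p,q}$ then yields three conditions: the off-diagonal entries vanish, giving pairwise $(p,q)$-orthogonality of the columns; the first $p$ diagonal entries give $\langle A_i, A_i \rangle_{p,q} = +1$ for $1 \leq i \leq p$; and the last $q$ diagonal entries give $\langle A_i, A_i \rangle_{p,q} = -1$ for $p+1 \leq i \leq p+q$. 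Together these are exactly the column conditions in the statement, so the biconditional follows.

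There is no substantive obstacle here—the lemma is essentially an unpacking of the definition—but one should take care with the ordering of the columns. The equation $A^\T I_{p,q} A = I_{p,q}$ forces the positive-magnitude columns to occupy the first $p$ positions and the negative-magnitude columns to occupy the last $q$. A rearrangement mixing the two sign classes corresponds to right-multiplying $A$ by a permutation matrix not lying in $O(p,q)$, and so would not preserve membership; this positional convention is the one subtle point worth flagging in the write-up.
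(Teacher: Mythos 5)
Your proof is correct and reaches the same conclusion, but it takes a genuinely different route from the paper's. The paper proceeds by a complexification trick: it defines $D = \sqrt{I_{p,q}} = \mathrm{diag}(1,\dots,1,i,\dots,i)$, observes that $A \in O(p,q)$ if and only if $DAD$ is a complex orthogonal matrix, and then cites the known column characterization of complex orthogonal matrices (orthonormality with respect to the complex dot product). If one unwinds that statement, it reduces to exactly your entry-by-entry reading of $A^\T I_{p,q} A = I_{p,q}$, so the two arguments have the same content; yours is the more elementary and self-contained version (no appeal to complex orthogonal groups), while the paper's buys a one-line proof at the cost of invoking that auxiliary fact and of working briefly over $\mathbb{C}$. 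Two small remarks on your write-up. First, a slip: from $A^{-1} = I_{p,q}A^\T I_{p,q}$, \emph{left}-multiplying by $A$ yields $A I_{p,q} A^\T = I_{p,q}$ (the row version); to get the column version $A^\T I_{p,q} A = I_{p,q}$ directly, \emph{right}-multiply by $A$ and then left-multiply by $I_{p,q}$ (of course the two forms are equivalent, so this does not affect correctness). Second, your closing observation about column ordering is a real subtlety: both proofs actually establish the sharper statement that the first $p$ columns have magnitude $+1$ and the last $q$ have magnitude $-1$, which is what the equation $A^\T I_{p,q} A = I_{p,q}$ forces; the lemma's looser phrasing (``there are $p$ columns\dots and $q$ columns\dots'') should be read with that positional convention understood, and you were right to flag it, since the paper's proof glosses over the point.
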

\begin{proof}[Proof of Lemma 4.2] 
Define $D = \sqrt{I_{p,q}} = \text{diag}(1,\dots,1,i,\dots,i)$ and observe that $DAD$ is a complex orthogonal (NOT unitary) matrix. Therefore the columns of $DAD$ form an orthonormal set with respect to the \textit{dot product} (NOT the standard inner product) on $\bb C^{p+q}$. But this is exactly equivalent to the desired condition. 
\end{proof}

\begin{proof}[Proof of Lemma 4.1] 
Let the first column of $A_x$ be the coefficients of $x$ in the standard basis. Because the bilinear form is non-degenerate, we have $\bb R^{p+q} = \langle x \rangle + \langle x \rangle ^\perp$. In fact it is a direct sum because the intersection consists of magnitude 0 vectors, but must also be contained in $\langle x \rangle$, which contains no magnitude 0 vectors. Assume $\langle x \rangle^\perp\ne 0$, otherwise $p$ or $q$ is 0 and the other is 1, and the lemma would be proven. Denote the standard symmetric bilinear form of signature $(p,q)$ as $Q$. It must be that $\text{dim }\langle x\rangle^\perp = p+q-1$ and the signature of $Q|_{\langle x\rangle^\perp}$ is $(p-1,q)$, so choose $x_1 = x$ and $x_2$ some magnitude 1 vector in $\langle x \rangle^\perp$, and place these in the first two columns. To fill in the third column, we observe that $\bb R^{p,q} = \langle x_1,x_2\rangle \oplus \langle x_1,x_2\rangle ^\perp$: The intersection is given by 
\begin{gather*}
\Big\{ ax_1 + bx_2\ |\ \forall u,w,\ \langle ax_1+bx_2,ux_1+wx_2\rangle = 0\Big\}\\
= \Big\{ ax_1 + bx_2\ |\ \forall u,w,\ au + bw = 0\Big\}\\
= \{0\}
\end{gather*}
$Q|_{\langle x_1,x_2\rangle^\perp} = (p-2,q)$, so we may choose a magnitude one vector in $\langle x_1,x_2\rangle^\perp$ and place it in column 3 of $A_x$. An inductive argument fills out the rest of the matrix with $p$ columns of magnitude 1 and $q$ columns of magnitude $-1$, all of which are pairwise orthogonal with respect to the $(p,q)$ bilinear form. The matrix is orthogonal so it has determinant $\pm 1$ (it is easy to show this even for orthogonal groups of indefinite forms), and if it is $-1$ we may swap two adjacent columns besides the first to obtain the desired matrix in $SO(p,q)$.
\end{proof}

\begin{cor}$A_x$ defines a homeomorphism $\inv\pi(e_1) \cong \inv\pi(x)$, so all fibers are homeomorphic to $SO(p-1,q)$ and
\[
SO(p-1,q) \to SO(p,q) \to X_{p,q}^+
\]
is a fibration.
\end{cor}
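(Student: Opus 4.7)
The plan is to prove this in three steps: establish that left multiplication by $A_x$ is a homeomorphism between the relevant fibers, conclude that every fiber is homeomorphic to $SO(p-1,q)$, and finally upgrade the projection $\pi$ to a fibration by exhibiting local triviality (which then implies the Serre fibration property by the result cited in Hatcher).

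For the first step, I would define $L_{A_x} : \pi^{-1}(e_1) \to \pi^{-1}(x)$ by $B \mapsto A_x B$. This is well-defined because $(A_x B) e_1 = A_x(B e_1) = A_x e_1 = x$, and it lands inside $SO(p,q)$ since $SO(p,q)$ is a group. Continuity follows because matrix multiplication is continuous, and $L_{A_x^{-1}}$ gives a two-sided inverse (note $A_x^{-1} \in SO(p,q)$ because $SO(p,q)$ is a group, and by Lemma 4.1 applied with the roles of $e_1$ and $x$ reversed, or simply by construction). Combined with the identification $\pi^{-1}(e_1) \cong SO(p-1,q)$ established above via the inclusion $i$, this gives a homeomorphism of every fiber with $SO(p-1,q)$.

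For the fibration claim, the main obstacle is local triviality of $\pi$, since it is not automatic from the fact that all fibers are abstractly homeomorphic. I would approach this by constructing a local section of $\pi$ near an arbitrary point $x_0 \in X_{p,q}^+$: choose an open neighborhood $U$ of $x_0$ such that one can perform the Gram--Schmidt-like construction used in the proof of Lemma 4.1 continuously in $x \in U$ (starting from $x_1 = x$ and completing to a $(p,q)$-orthogonal basis with the prescribed signs and correct orientation). This yields a continuous map $s : U \to SO(p,q)$ with $\pi \circ s = \mathrm{id}_U$, and from $s$ one builds a local trivialization
\[
U \times SO(p-1,q) \;\longrightarrow\; \pi^{-1}(U), \qquad (x, A') \longmapsto s(x)\, i(A'),
\]
whose inverse sends $B \in \pi^{-1}(x)$ to $(x, i^{-1}(s(x)^{-1}B))$; the element $s(x)^{-1}B$ lies in $\pi^{-1}(e_1)$, hence in the image of $i$. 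This exhibits $\pi$ as a fiber bundle with fiber $SO(p-1,q)$.

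The harder part will be verifying the continuity of the local section $s$ at low dimensions and when choices (such as swapping columns to correct the determinant) must be made coherently; in practice this is handled by shrinking $U$ so that the continuous completion can be carried out without ambiguity, which is possible because the subspaces $\langle x_1, \ldots, x_k \rangle^\perp$ vary continuously with the $x_i$ and the restriction of $Q$ to them has constant signature. Once local triviality is in hand, the appeal to the standard fact (Hatcher, Chapter 4.2) that every fiber bundle is a Serre fibration completes the proof, giving the desired fibration $SO(p-1,q) \to SO(p,q) \to X_{p,q}^+$.
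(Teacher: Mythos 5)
Your proof is correct and fills a gap the paper leaves open: the paper states this corollary without any proof (for the earlier quasi-sphere fibration it explicitly delegates local triviality to the reader as an exercise, and here it offers not even that). Your first step, the left-translation homeomorphism $L_{A_x}$ between fibers, is exactly what the phrase ``$A_x$ defines a homeomorphism'' in the statement is gesturing at, and it is carried out cleanly. Your second step, local triviality via a continuous local section $s\colon U\to SO(p,q)$ built by a parametrized Gram--Schmidt, is the honest hands-on route; the one point you rightly flag, that the orientation-correcting column swap must be made coherently, is harmless since $\det$ is a continuous $\{\pm1\}$-valued function of a continuously chosen frame and is therefore locally constant, so on a connected $U$ one either always swaps or never does.

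Worth noting, since it is cleaner and the paper already cites the relevant reference: $SO(p-1,q)$ is the stabilizer of $e_1$ under a smooth transitive action of the Lie group $SO(p,q)$ on $X_{p,q}^+$, hence a closed subgroup, and the standard theorem (e.g.\ in \cite{lee}) that the quotient map $G\to G/H$ of a Lie group by a closed subgroup is a smooth fiber bundle gives local triviality in one stroke, after identifying $X_{p,q}^+\cong SO(p,q)/SO(p-1,q)$ via Lemma 4.1. This avoids having to argue directly that the Gram--Schmidt-like completion can be made continuous. Either route, combined with the cited fact that fiber bundles are Serre fibrations, completes the corollary; your version is more elementary and self-contained, the Lie-theoretic version is shorter and less delicate.
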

There is also a second fibration 
\begin{equation}
SO(p,q-1) \hookrightarrow SO(p,q) \rightarrow X^-_{p,q} = X^+_{q,p},
\end{equation}
which can be shown by the same argument but embedding $A \in SO(p,q-1)$ via
\[
\begin{pmatrix}
A & 0\\
0 & -1
\end{pmatrix} \in SO(p,q)
\]
and looking at the last column. Also $X^-_{p,q} = X^+_{q,p}$ is given by the fact that
\[
x_1^2 + \dots + x_p^2 - x_{p+1}^2 - \dots - x_{p+q}^2 = -1 \Longleftrightarrow x_{p+1}^2 + \dots + x_{p+q}^2 - x_1^2 - \dots - x_p^2  = 1.
\]
Using these two fibrations, we can now prove the main result. To reiterate, we do not claim any originality of this result, but believe the proof itself is of interest. 

\begin{thm}For all $p, q\ge 0$,
\[\pi_1(SO^+(p,q)) \cong \pi_1(SO(p)) \times \pi_1(SO(q)). \]
\end{thm}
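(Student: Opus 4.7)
The plan is to induct on $p$ (uniformly in $q$) using the fibration (4.1) restricted to identity components. The base cases are $p = 0$ and $p = 1$; the inductive step for $p \geq 4$ is essentially formal; and the cases $p = 2, 3$ require careful analysis of the connecting maps in the long exact sequence. For $p = 0$ the formula is immediate since $SO^+(0,q) = SO(q)$. For $p = 1$, the quasi-sphere $X^+_{1,q}$ has two path components, and $SO^+(1,q)$ (preserving the time orientation) acts transitively on the forward sheet containing $e_1$; this sheet is homeomorphic to $\bb R^q$ and hence contractible. So (4.1) restricts to a fibration $SO(q) \to SO^+(1,q) \to \bb R^q$ with contractible base, and its long exact sequence yields $\pi_k(SO^+(1,q)) \cong \pi_k(SO(q))$ for all $k$.

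For $p \geq 2$, the quasi-sphere $X^+_{p,q}$ is path-connected and $SO^+(p,q)$ acts transitively on it (by a dimension count applied to the orbit of $e_1$), so (4.1) restricts to the fibration $SO^+(p-1,q) \to SO^+(p,q) \to X^+_{p,q}$ with the expected fiber. For $p \geq 4$, Section 3 gives $\pi_1(X^+_{p,q}) = \pi_2(X^+_{p,q}) = 0$, so the LES immediately yields $\pi_1(SO^+(p,q)) \cong \pi_1(SO^+(p-1,q))$; since $\pi_1(SO(p-1)) = \pi_1(SO(p)) = \bb Z/2\bb Z$ for $p \geq 4$, the induction closes on this range.

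The cases $p = 2, 3$ demand more care. For $p = 2$, the LES becomes the short exact sequence $0 \to \pi_1(SO(q)) \to \pi_1(SO^+(2,q)) \to \bb Z \to 0$; since $\pi_1$ of a topological group is abelian and $\bb Z$ is free, this splits to give $\pi_1(SO^+(2,q)) \cong \bb Z \times \pi_1(SO(q))$. A canonical splitting is provided by the block-diagonal inclusion $SO(2) \hookrightarrow SO^+(2,q)$: its composition with the projection to $X^+_{2,q}$ sends the generator of $\pi_1(SO(2))$ to the equatorial circle, which generates $\pi_1(X^+_{2,q}) = \bb Z$. For $p = 3$, the LES reads $\bb Z = \pi_2(X^+_{3,q}) \xrightarrow{\partial} \pi_1(SO^+(2,q)) \to \pi_1(SO^+(3,q)) \to 0$, and it remains to identify $\partial$. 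I would invoke naturality of the LES for the morphism of fibrations from the classical $SO(p-1) \to SO(p) \to S^{p-1}$ into $SO^+(p-1,q) \to SO^+(p,q) \to X^+_{p,q}$, where the vertical maps are the block-diagonal inclusions and the equatorial embedding $S^{p-1} \hookrightarrow X^+_{p,q}$ is a homotopy equivalence via the homeomorphism $X^+_{p,q} \cong \bb R^q \times S^{p-1}$ from Section 3. This reduces $\partial$ to the classical connecting map $\pi_2(S^2) \to \pi_1(SO(2))$ of $SO(2) \to SO(3) \to S^2$, which is multiplication by $2$ (equivalent to the $\pi_1(SO(3)) = \bb Z/2\bb Z$ computation recalled in the introduction). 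Hence $\partial$ hits $2\bb Z \times 0 \subset \bb Z \times \pi_1(SO(q))$, and the cokernel is $\bb Z/2\bb Z \times \pi_1(SO(q)) = \pi_1(SO(3)) \times \pi_1(SO(q))$.

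The main obstacle is the identification of $\partial$ in the $p = 3$ case: without the naturality argument against the classical fibration, we are left with an extension ambiguity. The disconnectedness of the base in the $p = 1$ case is a secondary subtlety forcing us to restrict to the forward sheet. Once these are settled, everything else is a formal manipulation of the LES, driven entirely by the computations of $\pi_k(X^+_{p,q})$ for $k = 1, 2$ from Section 3.
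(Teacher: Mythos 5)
Your proposal is correct and, in the crucial $p=3$ step, takes a genuinely different and arguably cleaner route than the paper. Both arguments induct on $p$ via the fibration $SO^+(p-1,q) \to SO^+(p,q) \to X^+_{p,q}$ and agree on the cases $p \le 2$ and $p \ge 4$. For $p=3$, however, the paper switches to the \emph{second} fibration $SO^+(3,q-1) \to SO^+(3,q) \to X^+_{q,3}$ and inducts on $q$, which forces a separate analysis at $q=3$: there the relevant exact sequence has an extension ambiguity ($\bb Z/2 \times \bb Z/2$ versus $\bb Z/4$), which the paper resolves by establishing the exceptional isomorphism $SO^+(3,3) \cong SL(4,\bb R)/\{\pm 1\}$ and then a careful diagram chase comparing to the $SO(2)\to SO(3) \to S^2$ sequence. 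You instead stay with the first fibration and pin down the connecting map $\partial : \pi_2(X^+_{3,q}) \to \pi_1(SO^+(2,q))$ directly, via naturality of the long exact sequence against the morphism of fibrations from the classical $SO(2) \to SO(3) \to S^2$ induced by the block-diagonal inclusions and the equatorial embedding $S^2 \hookrightarrow X^+_{3,q}$ (a homotopy equivalence, by the product structure $X^+_{3,q}\cong \bb R^q \times S^2$). Since the classical connecting map is multiplication by $2$, and you have arranged a canonical splitting $\pi_1(SO^+(2,q)) \cong \bb Z \times \pi_1(SO(q))$ with the $\bb Z$ factor coming precisely from the block $SO(2)$, you read off $\partial(n) = (2n,0)$ and the cokernel $\bb Z/2 \times \pi_1(SO(q))$ uniformly in $q$. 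This sidesteps the special treatment of $SO^+(3,3)$ entirely. Two things you do that are worth emphasizing as necessary: fixing the specific section of the $p=2$ short exact sequence (the abstract splitting alone would not suffice for the naturality computation at $p=3$), and restricting to the forward sheet of $X^+_{1,q}$ at $p=1$, which the paper glosses over. The paper's detour has the compensating interest of exhibiting the accidental isomorphism $SO^+(3,3)\cong SL(4,\bb R)/\{\pm 1\}$, but as a proof of the theorem your naturality argument is more economical.
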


\begin{rem*}Since $SO(p,q)$ has two connected components \cite{ov}, p. 43, we restrict to $SO^+(p,q)$, the identity component of $SO(p,q)$. In the physics literature, $SO(p,q)$ is known as the \textit{Lorentz group} (sometimes only with $p=1,q=3)$. This group is of interest to physicists because it is realized as a subgroup of isometries of Minkowski space, namely those which fix the origin. These transformations fix null vectors, geodesics of light in spacetime. The identity component, those transformations which preserve the orientation of both time and space, is known as the \textit{orthochronous Lorentz group}, or \textit{restricted Lorentz group}. When adding in (via a semidirect product) translations, one obtains the \textit{Poincare group}, which is the full symmetry group of special relativity. Obviously null vectors themselves are no longer fixed, but the set of all such is preserved. Higher values of $p$ and $q$ may be of interest to physicists working in high dimension, such as string theories.
\end{rem*}

\begin{proof} We now assume without loss of generality $p \geq q$, since we have $SO^+(p,q) \cong SO^+(q,p)$. The fibration (3.1) induces the fibration on connected components
\[
SO^+(p-1,q) \hookrightarrow SO^+(p,q) \rightarrow X^+_{p,q}, \quad \text{ for } p \geq 2,
\]
because $\text{Id}_{p-1+q} \mapsto \text{Id}_{p+q}$ and $SO^+(p-1,q)$ is connected, the inclusion map is continuous, thus its image is a connected subset which contains the identity $\text{Id}_{p+q}$. This gives a long exact sequence on homotopy groups
\[
\begin{tikzcd}
& \cdots \arrow[r] &  \pi_2(X^+_{p,q}) \ar[overlay, out=-10, in=170]{dll} &    \\
\pi_1(SO^+(p-1,q)) \arrow[r] & {\pi_1(SO^+(p,q))} \arrow[r] & \pi_1(X^+_{p,q}) \ar[overlay, out=-10, in=170]{dll}  &   \\
\pi_0(SO^+(p-1,q)) \arrow[r] & {\pi_0(SO^+(p,q))} \arrow[r] & \pi_0(X^+_{p,q}) \arrow[r]   & 0.
\end{tikzcd}
\]
We use this long exact sequence, along with the fundamental groups $\pi_k(X^+_{p,q})$ computed in section 2 to prove our result. We proceed by induction when $p > 3$, and prove the ``base cases'' of $p \leq 3$ directly.
\begin{itemize}
    \item [($p = 1$)] Since $\pi_1(X^+_{1,q}) = \pi_2(X^+_{1,q}) = 0$, we get the exact sequence
\[
0 \rightarrow \pi_1(SO^+(0,q)) = \pi_1(SO(q)) \rightarrow \pi_1(SO^+(1,q)) \rightarrow 0.
\]
From the exactness of the sequence above we get
\begin{align*}
&\boxed{\pi_1(SO^+(1,q))} = \pi_1(SO^+(0,q)) = \pi_1(SO(q)) \\
&\qquad\qquad\qquad\qquad\qquad = 0 \times \pi_1(SO(q)) = \boxed{\pi_1(SO(1)) \times \pi_1(SO(q))}
\end{align*}
as required. Note that since $SO(1)$ consist of only one element we must have $\pi_1(SO(1)) = 0$.\\

\item [($p = 2$)] Since $\pi_1(X^+_{2,q}) = \mathbb{Z}, \pi_2(X^+_{2,q}) = 0$, we get the short exact sequence
\[
0 \rightarrow \underbrace{\pi_1(SO^+(1,q)) = \pi_1(SO(q))}_\textrm{from $p=1$ case above} \hookrightarrow \pi_1(SO^+(2,q)) \rightarrow \mathbb{Z} \rightarrow 0
\]
which gives
\[
\boxed{\pi_1(SO^+(2,q))} = \mathbb{Z} \times \pi_1(SO^+(1,q)) = \boxed{\pi_1(SO(2)) \times \pi_1(SO(q))}
\]
since $SO(2) \cong S^1$ implies $\pi_1(SO(2)) = \mathbb{Z}$.\\

\item [($p = 3$)] Similar to above, restricting to $SO^+(p,q)$ on fibration (3.2) gives
\[
SO^+(3,q-1) \hookrightarrow SO^+(3,q) \rightarrow X^-_{3,q} = X^+_{q,3}
\]
which gives the long exact sequence
\[
\begin{tikzcd}
& \cdots \arrow[r] &  \pi_2(X^+_{q,3}) \ar[overlay, out=-10, in=170]{dll} &    \\
\pi_1(SO^+(3,q-1)) \arrow[r] & {\pi_1(SO^+(3,q))} \arrow[r] & \pi_1(X^+_{q,3}) \ar[overlay, out=-10, in=170]{dll}  &   \\
\pi_0(SO^+(3,q-1)) \arrow[r] & {\pi_0(SO^+(3,q))} \arrow[r] & \pi_0(X^+_{q,3}) \arrow[r]   & 0.
\end{tikzcd}
\]
We will use this to prove our result by considering cases on $q$. We know 
\[
\pi_1(SO^+(3,0)) = \pi_1(SO(3)) = \mathbb{Z}/2\mathbb{Z}
\]
so in the case of $q = 1$ we get
\[
\begin{tikzcd}
& \cdots \arrow[r] &  \pi_2(X^+_{1,3}) = 0 \ar[overlay, out=-10, in=170]{dll} \\
\pi_1(SO^+(3,0)) \arrow[r] & {\pi_1(SO^+(3,1))} \arrow[r] & \pi_1(X^+_{1,3})=0 
\end{tikzcd}
\]
which simplifies to
\[
0 \rightarrow \pi_1(SO^+(3,0)) \rightarrow \pi_1(SO^+(3,1)) \rightarrow 0
\]
to gives us
\[
\boxed{\pi_1(SO^+(3,1))} = \pi_1(SO^+(3,0)) = \pi_1(SO(3)) = \boxed{\pi_1(SO(3)) \times \pi_1(SO(1))}.
\]
 We do the same for $q =2$ to get
\[
\begin{tikzcd}
& \cdots \arrow[r] &  \pi_2(X^+_{2,3}) = 0 \ar[overlay, out=-10, in=170]{dll} \\
\pi_1(SO^+(3,1)) \arrow[r] & {\pi_1(SO^+(3,2))} \arrow[r] & \pi_1(X^+_{2,3})= \bb Z \ar[overlay, out=-10, in=170]{dll}\\
\underbrace{\pi_0(SO^+(3,1)) = 0}_{\text{path-connected}} &&
\end{tikzcd}
\]
which gives us
\[
0 \rightarrow \pi_1(SO^+(3,1)) \rightarrow \pi_1(SO^+(3,2)) \rightarrow \mathbb{Z} \rightarrow 0
\]
to get
\[
\boxed{\pi_1(SO^+(3,2))} = \pi_1(SO^+(3,1)) \times \mathbb{Z} = \boxed{\pi_1(SO(3)) \times \pi_1(SO(2))}
\]
where $\pi_1(SO^+(3,1)) = \pi_1(SO(3))$ from above. The case of $q = 3$, i.e.  $\pi_1(SO^+(3,3))$, is more involved than the rest, so it is dealt with separately later. For now we will assume it. If $q > 3$ then we have
\[
\begin{tikzcd}
& \cdots \arrow[r] &  \pi_2(X^+_{q,3}) = 0 \ar[overlay, out=-10, in=170]{dll} \\
\pi_1(SO^+(3,q-1)) \arrow[r] & {\pi_1(SO^+(3,q))} \arrow[r] & \pi_1(X^+_{q,3}) = 0 \ar[overlay, out=-10, in=170]{dll}\\
\pi_0(SO^+(3,q-1)) = 0 &&
\end{tikzcd}
\]
which simplifies to
\[
0 \rightarrow \pi_1(SO^+(3,q-1)) \rightarrow \pi_1(SO^+(3,q)) \rightarrow 0
\]
to get 
\[
\pi_1(SO^+(3,q)) = \pi_1(SO^+(3,q-1)) = \dots = \pi_1(SO^+(3,3))
\]
for $q > 3$, and since $\pi_1(SO(q-1)) = \pi_1(SO(q))$ for $q>3$ we get the desired
\begin{align*}
\boxed{\pi_1(SO^+(3,q))} &= \pi_1(SO^+(3,3))\\
&= \pi_1(SO(3)) \times \pi_1(SO(3)) = \boxed{\pi_1(SO(3)) \times \pi_1(SO(q))}.
\end{align*}

\item[($p > 3$)] We induct on $p$. Since $\pi_1(X^+_{p,q}) = \pi_2(X^+_{p,q}) = 0$ for $p > 3$, we get the exact sequence
\[
0 \rightarrow \pi_1(SO^+(p-1,q)) \rightarrow \pi_1(SO^+(p,q)) \rightarrow 0
\]
by the induction hypothesis we then get
\begin{align*}
\boxed{\pi_1(SO^+(p,q))} &= \pi_1(SO^+(p-1,q))\\
&= \pi_1(SO(p-1)) \times \pi_1(SO(q)) = \boxed{\pi_1(SO(p)) \times \pi_1(SO(q)) }
\end{align*}
for $p > 3$. The last equality is due to $\pi_1(SO(q-1)) = \pi_1(SO(q))$ for $q>3$.
\end{itemize}
All that remains is the case $\pi_1(SO^+(3,3))$: \\\\
\begin{lem*}$SO^+(3,3) = SL(4, \mathbb{R}) / \{\pm 1\}$.
\end{lem*}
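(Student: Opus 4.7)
The plan is to exhibit an explicit 2-to-1 Lie group homomorphism $\varphi\colon SL(4,\bb R)\to SO^+(3,3)$ coming from the exterior square representation, and then show it is surjective by a dimension count. This is the classical exceptional isomorphism $A_3=D_3$, realized here over $\bb R$.

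First I would put a bilinear form on $V=\Lambda^2\bb R^4\cong\bb R^6$ by choosing a volume form $\Omega\in\Lambda^4\bb R^4$ and setting $Q(\omega,\eta)\Omega=\omega\wedge\eta$. In the basis $\{e_i\wedge e_j\}_{i<j}$ the only nonzero pairings are $Q(e_1\wedge e_2,e_3\wedge e_4)=1$, $Q(e_1\wedge e_3,e_2\wedge e_4)=-1$, $Q(e_1\wedge e_4,e_2\wedge e_3)=1$, and rotating to the basis $e_1\wedge e_2\pm e_3\wedge e_4$, $e_1\wedge e_3\pm e_2\wedge e_4$, $e_1\wedge e_4\pm e_2\wedge e_3$ diagonalizes $Q$ with three $+$'s and three $-$'s, so $Q$ has signature $(3,3)$.

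Next I would define $\varphi(A)=\Lambda^2 A$ for $A\in SL(4,\bb R)$. Since $(\Lambda^2 A)\omega\wedge(\Lambda^2 A)\eta=\Lambda^4 A\,(\omega\wedge\eta)=(\det A)(\omega\wedge\eta)=\omega\wedge\eta$, the map lands in $O(Q)\cong O(3,3)$; because $SL(4,\bb R)$ is connected, $\varphi$ actually lands in the identity component $SO^+(3,3)$. To compute the kernel I would observe that if $\Lambda^2 A$ is the identity then $A$ acts as a scalar on each 2-plane spanned by a pair of basis vectors, forcing $A=\lambda I$, and $\det A=\lambda^4=1$ together with $A\in SL(4,\bb R)$ gives $\lambda=\pm 1$. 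So $\ker\varphi=\{\pm I\}$.

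Finally I would compare dimensions: $\dim SL(4,\bb R)=15=\dim SO^+(3,3)$, so the induced injection $SL(4,\bb R)/\{\pm I\}\hookrightarrow SO^+(3,3)$ is a Lie group homomorphism of equal-dimensional connected Lie groups, hence a local diffeomorphism, hence has open image. An open subgroup of a topological group is automatically closed (its complement is a union of cosets), and $SO^+(3,3)$ is connected, so the image must be all of $SO^+(3,3)$, giving the desired isomorphism. The main obstacle I anticipate is bookkeeping the signature computation cleanly; the surjectivity piece is pure soft analysis once the dimensions are matched, and the kernel computation is immediate.
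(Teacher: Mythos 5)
Your proof is correct and follows essentially the same route as the paper: both realize the isomorphism through the $\Lambda^2$ representation of $SL(4,\bb R)$ on $\bb R^6$, endow $\Lambda^2\bb R^4$ with the wedge-to-top-form pairing, diagonalize to obtain signature $(3,3)$, and finish with a dimension count. You actually fill in two steps the paper leaves implicit — the kernel computation and the open-image/connectedness argument for surjectivity — so your version is, if anything, more complete than the one in the text.
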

\begin{proof}
The fundamental representation of $SL(4,\bb R)$, $V$, induces a representation on $U = \bigwedge^2 V$ of dimension ${4 \choose 2} = 6$. $U$ has a symmetric bilinear form defined by
\[
(v_1 \wedge v_2, v_3 \wedge v_4) = (v_1 \wedge v_2 \wedge v_3 \wedge v_4)/(e_1 \wedge e_2 \wedge e_3 \wedge e_4)
\]
where $(v_1 \wedge v_2 \wedge v_3 \wedge v_4) \in \bigwedge^4 V\cong \bb R$. For all $g \in SL(4,\R)$,
\begin{align*}
1 = &\det g = g(v_1 \wedge v_2 \wedge v_3 \wedge v_4)/(v_1 \wedge v_2 \wedge v_3 \wedge v_4)\\
&\implies \quad g(v_1 \wedge v_2 \wedge v_3 \wedge v_4) = (v_1 \wedge v_2 \wedge v_3 \wedge v_4)
\end{align*}
so $\bigwedge^4 V$ is invariant under the action by $SL(4,\R)$, i.e. the above symmetric form $(\cdot,\cdot )$ is preserved under $SL(4,\R)$. Now we compute the signature of $(\cdot,\cdot )$ by looking at the canonical basis of $\bigwedge^2 V$,
\begin{align*}
\lvert e_1 \wedge e_2 \pm e_3 \wedge e_4 \rvert ^2 &= \pm 2(e_1 \wedge e_2, e_3 \wedge e_4) = \pm 2\\
\lvert e_1 \wedge e_3 \pm e_2 \wedge e_4 \rvert ^2 &= \pm 2(e_1 \wedge e_3, e_2 \wedge e_4) = \mp 2\\
\lvert e_1 \wedge e_4 \pm e_2 \wedge e_3 \rvert ^2 &= \pm 2(e_1 \wedge e_4, e_2 \wedge e_3) = \pm 2 
\end{align*}
so the signature is $(3,-3)$. This means $SO(3,3)$ is also exactly the linear maps that preserves the symmetric form $(\cdot,\cdot )$ of determinant 1. Since $\dim SO(3,3) = 15 = \dim SL(4, \bb R)$, the connected component $SO^+(3,3)$ must be isomorphic to the connected component\\
$SL(4, \mathbb{R}) / \{\pm 1\}$, as required.
\end{proof}
The above of course implies that $\pi_1(SO^+(3,3)) = \pi_1(SL(4,\bb R)/\{\pm 1\})$. It is known\footnote{To see this, note that $GL_n^+(\bb R)$ deformation retracts to both $SL_n(\bb R)$ and $SO(n)$, via Gram-Schmidt.} that $\pi_1(SL_4(\bb R)) \cong \bb Z/2 \bb Z$, which means  $|\pi_1(SO^+(3,3))| = |\pi_1(SL_4(\bb R)/\{\pm 1\})| = 4$, i.e. $\pi_1(SO^+(3,3)) = \bb Z/2 \bb Z \times \bb Z/2 \bb Z$ or $\bb Z/4 \bb Z$, we just need to determine which one.\\\\
Since $\pi_1(X^+_{3,q}) = 0, \pi_2(X^+_{3,q}) = \mathbb{Z}$, for $q = 0$ we get 
\[
\begin{tikzcd}
\cdots \arrow[r]& {\pi_2(SO^+(3,0))} = 0 \arrow[r] &  \pi_2(X^+_{3,0}) = \bb Z \ar[overlay, out=-10, in=170]{dll} \\
\pi_1(\underbrace{SO^+(2,0)}_{= SO(2)}) \arrow[r] & {\pi_1(\underbrace{SO^+(3,0)}_{= SO(3)})} \arrow[r] & \pi_1(X^+_{3,0}) = 0
\end{tikzcd}
\]
which simplifies to the following short exact sequence,
\begin{equation}
0 \rightarrow \mathbb{Z} \xrightarrow{g} \pi_1(SO(2)) \xrightarrow{f} \pi_1(SO(3)) \rightarrow 0
\end{equation}
equivalent to 
\[
0 \rightarrow \mathbb{Z} \xrightarrow{g} \bb Z \xrightarrow{f} \bb Z/ 2\bb Z \rightarrow 0.
\]
Since this sequence is exact, there is only one possibility for a surjective map $g: \bb Z \to \bb Z/ 2\bb Z$ which is 
\[
g(z) = z \mod 2
\]
This map is used below. Using a similar argument for $q = 3$ we can also get the short exact sequence
\begin{equation}
0 \rightarrow \mathbb{Z} \rightarrow \pi_1(SO^+(3,2)) \rightarrow \pi_1(SO^+(3,3)) \rightarrow 0
\end{equation}
which is also used below. 

Now there is a commutative diagram
\[\begin{tikzcd}
SO^+(3,2) \arrow[hookrightarrow]{r}  & SO^+(3,3) \\ 
SO(2) \times SO(3) \arrow[hookrightarrow]{r} \arrow[hookrightarrow]{u}{\rotatebox{90}{\(\simeq\)}} & SO(3) \times SO(3) \arrow[hookrightarrow]{u}
\end{tikzcd}\]
where every arrow is an inclusion, given explicitly by
\[
\begin{tikzcd}
\begin{pmatrix} B & 0\\0 & A \end{pmatrix} \arrow[r, maps to]      & \begin{pmatrix} 1 & 0& 0\\0 & A & 0\\0 &0 & B \end{pmatrix}                       \\
{(A,B)} \arrow[r, maps to] \arrow[u, maps to] \arrow[r, maps to] & {\left(\begin{pmatrix} 1 & 0\\0 & A\end{pmatrix},B\right)} \arrow[u, maps to].
\end{tikzcd}
\]
This, combined with $(3.6)$ then gives
\[\begin{tikzcd}[sep=1.8em, font=\small]
0 \arrow{r} & \mathbb{Z} \arrow{r} & \pi_1(SO^+(3,2)) \arrow{r}{\gamma}  & \pi_1(SO^+(3,3)) \arrow{r} & 0 \\ 
&& \pi_1(SO(2)) \times \pi_1(SO(3)) \arrow{r}{g} \arrow{u}{\rotatebox{90}{\(\simeq\)}} & \pi_1(SO(3)) \times \pi_1(SO(3)) \arrow{u} \arrow{r} & 0.
\end{tikzcd}\]
From $(3.5)$ we see that $g$ must necessarily be the map
\[
g : (z,x) \mapsto (z \mod 2,x)
\]
since the map in the first coordinate comes from $(3.5)$, in other words $g$ must be surjective. Therefore the image of $\gamma$ is some image of $\pi_1(SO(3)) \times \pi_1(SO(3)) = \mathbb{Z}/2\mathbb{Z}\times \mathbb{Z}/2\mathbb{Z}$. Finally, from the sequence
\[
0 \rightarrow \mathbb{Z} \xrightarrow{f} \pi_1(SO^+(3,2)) = \mathbb{Z}\times  \mathbb{Z}/2\mathbb{Z} \rightarrow \pi_1(SO^+(3,3)) = \text{im}(\gamma) \rightarrow 0
\]
there are three cases: 
\begin{itemize}
\item $f: 1 \mapsto (2,0) \Rightarrow \text{im}(\gamma) = \mathbb{Z}/2\mathbb{Z} \times \mathbb{Z}/2\mathbb{Z}$
\item $f: 1 \mapsto (1,1) \Rightarrow \text{im}(\gamma) = \mathbb{Z}/2\mathbb{Z}$
\item $f: 1 \mapsto (2,1) \Rightarrow \text{im}(\gamma) = \mathbb{Z}/4\mathbb{Z}$.
\end{itemize}
We discussed above that $\text{im}(\gamma) = \mathbb{Z}/2\mathbb{Z} \times \mathbb{Z}/2\mathbb{Z} \text{ or } \mathbb{Z}/4\mathbb{Z}$ so the second option is not possible. We also showed $\text{im}(\gamma)$ is some image of $\mathbb{Z}/2\mathbb{Z}\times \mathbb{Z}/2\mathbb{Z}$ under a surjective map, so the third option is also not possible. Thus we conclude
\[
\pi_1(SO^+(3,3)) = \mathbb{Z}/2\mathbb{Z} \times \mathbb{Z}/2\mathbb{Z} = \pi_1(SO(3)) \times \pi_1(SO(3))
\]
as required, thus completing the final case of the proof.
\end{proof}
\newpage
\section{The fundamental group of $SO^+(p,q)$ via deformation retract}
For the reader's convenience, we reproduce two standard routes to calculating the above formula. 

The first method appeals to (an abbreviated version of, see \cite{knapp} Chapter 6 for general treatment) the Cartan decomposition. It is easy to see that 
\[
\mathfrak{so}(p,q) = \{X \in Mat_n(\bb R)\ |\ X^TI_{p,q} = -I_{p,q}X\}.
\]
Define the map $\theta: \mathfrak{so}(p,q) \to \mathfrak{so}(p,q)$ sending $X \mapsto -X^T$ (Also observe that because $Lie(G)$ is a local construction, $Lie(G^+) \cong Lie(G)$). $\theta$ is an involution and the bilinear form $-B(-,\theta(-))$, where $B$ is the Killing form, is positive definite, so $\mathfrak{so}(p,q) = \mathfrak k \oplus \mathfrak p$, where $\mathfrak k$ is the positive 1 eigenspace and $\mathfrak p$ is the negative 1 eigenspace of $\theta$. The bracket obeys the relations
\[
[\mathfrak k, \mathfrak k] \subset \mathfrak k,\quad [\mathfrak k, \mathfrak p ]\subset \mathfrak p,\quad [\mathfrak p, \mathfrak p]\subset \mathfrak k
\]
In particular, because the $\theta$-eigenvalue of a bracket is the product of the $\theta$-eigenvalues. For this reason, the above decomposition is only in terms of vector spaces, not Lie subalgebras: $\mathfrak p$ isn't even a Lie subalgebra. Such a decomposition obeying the commutation relations arising from a map $\theta$ (\underline{Cartan involution}) is called a \underline{Cartan decomposition}.

We would like to lift this to a decomposition of the Lie group, $SO^+(p,q)$, by using the exponential. The difficult work in this approach is to analyze the exponential on both factors. In this case, one is quite tractable, while the other must be handled with the powerful general theory.
\begin{lem}
\[
\mathfrak k \cong \mathfrak{so}(p) \times \mathfrak{so}(q)
\]
\end{lem}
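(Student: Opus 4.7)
The plan is to make everything fully explicit by writing elements of $\mathfrak{so}(p,q)$ in block form, then simply reading off which blocks survive the condition $\theta(X) = X$. This avoids any abstract Lie-theoretic machinery: the isomorphism will pop out as a direct block-diagonal description.

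First I would decompose an arbitrary $X \in \mathrm{Mat}_n(\mathbb{R})$ as
\[
X = \begin{pmatrix} A & B \\ C & D \end{pmatrix},
\]
where $A$ is $p\times p$, $D$ is $q\times q$, $B$ is $p\times q$, and $C$ is $q\times p$. Plugging this into the defining relation $X^T I_{p,q} = -I_{p,q} X$ and comparing the four blocks, I would show that $X \in \mathfrak{so}(p,q)$ if and only if $A^T = -A$, $D^T = -D$, and $C = B^T$. So a general element of $\mathfrak{so}(p,q)$ has the form
\[
X = \begin{pmatrix} A & B \\ B^T & D \end{pmatrix}, \qquad A \in \mathfrak{so}(p),\ D \in \mathfrak{so}(q),\ B \in \mathrm{Mat}_{p \times q}(\mathbb{R}).
\]

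Next I would identify $\mathfrak{k}$. By definition, $X \in \mathfrak{k}$ iff $\theta(X) = X$, i.e. $X^T = -X$, which is simply the condition that $X$ be antisymmetric. Applied to the block form above, antisymmetry of the off-diagonal blocks forces $B = -(B^T)^T = -B$, hence $B = 0$. Therefore
\[
\mathfrak{k} = \left\{ \begin{pmatrix} A & 0 \\ 0 & D \end{pmatrix} : A \in \mathfrak{so}(p),\ D \in \mathfrak{so}(q) \right\}.
\]

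Finally, the assignment $\begin{pmatrix} A & 0 \\ 0 & D \end{pmatrix} \mapsto (A, D)$ is manifestly an $\mathbb{R}$-linear bijection, and since the bracket in $\mathfrak{k}$ is just matrix commutator acting block-wise (the off-diagonal blocks remain zero under commutators of block-diagonal matrices), it is a Lie algebra isomorphism $\mathfrak{k} \cong \mathfrak{so}(p) \times \mathfrak{so}(q)$. Since every step is a direct computation, there is no real obstacle; the only point requiring care is the sign bookkeeping when translating $X^T I_{p,q} = -I_{p,q} X$ into the four block equations, and similarly the complementary statement that the $-1$-eigenspace $\mathfrak{p}$ consists exactly of the block-off-diagonal pieces (parameterized by $B$), which is worth recording for later use in the Cartan decomposition argument.
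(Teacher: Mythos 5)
Your proof is correct and takes exactly the same approach as the paper: decompose $X$ into $2\times 2$ block form, read off from the defining relation $X^T I_{p,q} = -I_{p,q}X$ that $A\in\mathfrak{so}(p)$, $D\in\mathfrak{so}(q)$, $C=B^T$, and then observe that the fixed-point condition $\theta(X)=X$ (i.e.\ $X^T=-X$) kills the off-diagonal blocks. You simply spell out the block bookkeeping more explicitly than the paper does, and also record the description of $\mathfrak p$, which is a useful addition.
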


\begin{proof}From the expression given for $\mathfrak{so}(p,q)$, any matrix has block decomposition $X = \begin{pmatrix} A & B \\ C & D \end{pmatrix}$ where $B = C^T$ and $A \in \mathfrak{so}(p)$, $D \in \mathfrak{so}(q)$. To lie in the $+1$ eigenspace, it must be that $X =-X^T \Rightarrow B = C = 0$, from which the result follows. 
\end{proof}
It follows that the image $\exp(\mathfrak k) \subset SO(p) \times SO(q)$.
\begin{lem}\textit{$SO(n)$ is a compact subspace of $\bb R^{n^2}$ for all $n$.}
\end{lem}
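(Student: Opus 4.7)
The plan is to invoke the Heine--Borel theorem: since $SO(n) \subset \mathbb{R}^{n^2}$, it suffices to show that $SO(n)$ is both closed and bounded as a subset of $\mathbb{R}^{n^2}$ with its standard topology (e.g.\ the Frobenius/Euclidean norm on matrices).

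For closedness, I would write $SO(n)$ as an intersection of preimages of closed sets under continuous maps. Concretely, the map $\Phi : M_n(\mathbb{R}) \to M_n(\mathbb{R})$ defined by $\Phi(A) = A^T A - I$ is continuous (each entry is a polynomial in the entries of $A$), and so is $\det : M_n(\mathbb{R}) \to \mathbb{R}$. Then
\[
SO(n) = \Phi^{-1}(\{0\}) \cap \det{}^{-1}(\{1\}),
\]
which is the intersection of two closed sets, hence closed.

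For boundedness, I would use the orthonormality of the columns. If $A = (a_{ij}) \in SO(n)$, then the condition $A^T A = I$ says that the columns $A_j$ of $A$ form an orthonormal set in $\mathbb{R}^n$ with the standard inner product; in particular $\sum_{i} a_{ij}^2 = 1$ for each $j$. Summing over $j$ gives the Frobenius norm bound
\[
\|A\|_F^2 = \sum_{i,j} a_{ij}^2 = n,
\]
so $SO(n)$ is contained in the sphere of radius $\sqrt{n}$ in $\mathbb{R}^{n^2}$, and therefore bounded. Combining closedness and boundedness with Heine--Borel yields compactness.

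There is no real obstacle here; this is a textbook exercise and I do not expect any delicate step. The only thing to be careful about is to make explicit which norm on $M_n(\mathbb{R})$ we are using for the identification with $\mathbb{R}^{n^2}$, but since all norms on a finite-dimensional real vector space are equivalent, the choice does not matter for the topological conclusion.
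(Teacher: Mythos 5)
Your proof is correct and follows essentially the same route as the paper: Heine--Borel, with closedness coming from $SO(n)$ being cut out by polynomial (hence continuous) equations and boundedness coming from the orthonormality of the columns. Your version just makes the Frobenius norm bound $\|A\|_F = \sqrt{n}$ explicit, which the paper leaves implicit.
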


\begin{proof}By the Heine-Borel theorem, it suffices to show that $SO(n)$ is closed and bounded. For closedness, observe that $SO(n)$ is the zero set of a finite number of polynomial equations in the coordinates (matrix entries), namely $AA^T=A^TA=Id$ and $det\ A = 1$, since matrix multiplication and determinant operations are polynomial. Further, the columns of any matrix belonging to $SO(n)$ must form an orthonormal basis of $\bb R^n$, so $SO(n)$ is bounded.
\end{proof}
Since $SO(p),SO(q)$ are compact, $\exp$ is surjective, so $\exp(\mathfrak k) = SO(p) \times SO(q) \equiv K$. Then the Cartan decomposition on the level of Lie groups takes the form of a diffeomorphism
\[
K\times \mathfrak p \overset\sim\longrightarrow G
\]
via the map $(k,p) \mapsto k \cdot \exp(p)$, so that every matrix $A \in SO^+(p,q)$ can be written uniquely in the form 
\[
A = Q\exp(P)
\]
for $Q \in K$ and $P \in \mathfrak p$. In general to prove the map above is a diffeomorphism is very involved, and a full proof (along with other statements in the same context) can be found in \cite{knapp}. This is a version of polar decomposition for the indefinite orthogonal group. In particular, the quotient $G/K$, which is generally a Riemannian symmetric space, is affine, so the projection $G\to G/K$ is a homotopy equivalence, thus inducing isomorphisms
\[
\pi_n(SO^+(p,q)) \cong \pi_n(SO(p)) \times \pi_n(SO(q)) \quad \quad n \geq 0.
\]
One advantage of the approaches taken in this section is that a deformation retract preserves \textit{all} homotopy groups, not just the fundamental group, although there is no general formula for the right hand side.
\\\\
Another decomposition very related to the Cartan decomposition is the Iwasawa decomposition. In this case, reference to the decomposition already brings one close to the desired result.
\begin{thm}[Iwasawa-Malcev,\cite{stroppel}] Let $G$ be a locally compact group such that $G/G^+$ is compact. Then the followings hold.
\begin{enumerate}[label=(\roman*)]
    \item Every compact subgroup of $G$ is contained in some maximal compact subgroup of $G$.
    \item The maximal compact subgroups of $G$ form a single conjugacy class.
    \item There exists some natural number $n$ such that the underlying topological space of $G$ is homeomorphic to $\bb R^n \times K$, where $K$ is one of the maximal compact subgroups of $G$.
    \item In particular, every maximal compact subgroup of a locally compact connected group is connected.
\end{enumerate}
\end{thm}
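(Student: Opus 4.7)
The plan is to reduce to the case where $G$ is a connected Lie group and then exploit the Cartan decomposition together with a fixed point theorem on the associated Riemannian symmetric space. The reduction rests on the Gleason--Montgomery--Zippin solution to Hilbert's fifth problem and Yamabe's theorem, which together approximate any connected locally compact group by Lie groups, with the hypothesis that $G/G^+$ is compact permitting the four assertions to be transferred back from the identity component in the Lie setting to the original $G$. I would cite this reduction as a black box and focus on the case of a connected real Lie group.

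Assume then that $G$ is a connected real Lie group with Lie algebra $\mathfrak g$. Pick a Cartan involution $\theta$ and write $\mathfrak g = \mathfrak k \oplus \mathfrak p$ as in Section~5, and let $K$ be the connected subgroup of $G$ with Lie algebra $\mathfrak k$. The hard analytic input is the polar decomposition
\[
K \times \mathfrak p \longrightarrow G, \qquad (k, P) \mapsto k \exp(P),
\]
which one proves is a diffeomorphism; this is essentially the content of what was cited from \cite{knapp} in Section~5. Granting it, (iii) is immediate with $n = \dim \mathfrak p$, and (iv) is an immediate corollary because $G$ connected and $\bb R^n$ connected force $K$ to be connected.

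For (i) and (ii), the key tool is the symmetric space $M = G/K$ with the $G$-invariant Riemannian metric induced by $B|_{\mathfrak p}$, where $B$ is the Killing form. This metric has non-positive sectional curvature, $M$ is complete, and $M$ is simply connected by (iii), so $M$ is a Hadamard manifold. For any compact subgroup $H \le G$, the action of $H$ on $M$ by isometries has bounded orbits, so Cartan's fixed point theorem produces a fixed point $g_0 K \in M$, which unwinds to $g_0^{-1} H g_0 \subset K$. Applying this to an arbitrary compact $H$ gives (i) with the maximal compact subgroup $g_0 K g_0^{-1}$; applying it to a second maximal compact subgroup and invoking maximality of each gives (ii). The principal obstacle is the polar decomposition step: once that analytic input and the non-positive curvature of $M$ are in place, all four parts fall out from essentially formal group-theoretic arguments together with Cartan's fixed point theorem. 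The secondary obstacle, in the fully general locally compact setting, is the Gleason--Montgomery--Zippin approximation, which is deep but entirely standard.
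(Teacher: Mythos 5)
The paper does not prove this theorem; it is quoted verbatim from Stroppel's monograph and used as a black box, which the authors explicitly flag in the introduction as one of the ``intensive steps'' their fibration argument is designed to avoid. So there is no internal proof to compare against. Evaluating your sketch on its own terms, however, there is a real gap at the point where you reduce to a connected Lie group and then immediately ``pick a Cartan involution $\theta$.'' Cartan involutions, the Killing-form metric on $\mathfrak{p}$, and the resulting Hadamard symmetric space $G/K$ are features of \emph{semisimple} (or, with minor extensions, reductive) Lie algebras; a generic connected Lie group $G$ has no Cartan involution in the relevant sense, its Killing form can be degenerate, and the connected subgroup with Lie algebra $\mathfrak{k}$ need not even be compact (the universal cover of $SL(2,\mathbb{R})$ is the standard example: the putative $K$ is a copy of $\mathbb{R}$). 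Iwasawa's actual proof of (iii) for connected Lie groups routes through the Levi decomposition $\mathfrak{g}=\mathfrak{s}\ltimes\mathfrak{r}$, handles the solvable radical $\mathfrak{r}$ by a separate inductive argument, applies the semisimple machinery only to a Levi factor, and then must control the center. Your sketch silently assumes the theorem's conclusion is already available in a form strong enough to run the Cartan fixed-point argument, which is why (i) and (ii) ``fall out'' so cleanly.

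What you have is a correct and clean proof of a much narrower statement: for $G$ a connected semisimple Lie group with finite center, the polar decomposition plus the Cartan fixed-point theorem on the nonpositively curved symmetric space $G/K$ gives (i)--(iv). That special case is in fact all the paper needs, since $SO^+(p,q)$ for $p+q\geq 3$ is semisimple with finite center (and the low $p+q$ cases are treated by hand in Section~4). If you want to prove the Iwasawa--Malcev theorem as actually stated you would need, even after the Gleason--Montgomery--Zippin reduction, to supply the Levi-decomposition bookkeeping for the non-semisimple and infinite-center cases, and the step from the identity component back to $G$ with $G/G^+$ compact also needs its own argument (it is not just a formal patching). As written, the sketch overreaches by presenting the semisimple-finite-center argument as if it covered all connected Lie groups.
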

In particular, $(ii), (iii)$ imply that $SO^+(p,q)$ has the same homotopy type as any of its maximal compact subgroups. Thus it only remains to identify one such.
\begin{lem} $SO(p) \times SO(q)$ is a maximal compact subgroup of $SO^+(p,q)$.
\end{lem}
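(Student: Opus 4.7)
The plan is to verify both compactness and maximality of $K := SO(p)\times SO(q)$ inside $G := SO^+(p,q)$, with the maximality leveraging the Cartan/polar decomposition set up earlier in this section. The block-diagonal embedding $(A,B)\mapsto \begin{pmatrix} A & 0 \\ 0 & B \end{pmatrix}$ manifestly preserves $I_{p,q}$ and has determinant $\det A \cdot \det B = 1$, so its image lies in $SO(p,q)$; since $SO(p)\times SO(q)$ is connected and its image contains the identity, this image is contained in the identity component $G$. Compactness of $K$ is then immediate from Lemma 5.2 applied to each factor, since products of compact spaces are compact.

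For maximality, I would argue by contradiction using the diffeomorphism $K\times \mathfrak p \xrightarrow{\sim} G$, $(k,P)\mapsto k\exp(P)$, established above. Suppose $H\subseteq G$ is a compact subgroup with $K\subsetneq H$, and pick $h\in H\setminus K$. Writing uniquely $h = Q\exp(P)$ with $Q\in K$ and $P\in \mathfrak p$, the condition $h\notin K$ forces $P\neq 0$ by uniqueness of the polar form. Then $\exp(P) = Q^{-1}h \in H$, and since $H$ is a subgroup the whole cyclic family $\{\exp(nP) = \exp(P)^n : n\in \bb Z\}$ lies in $H$.

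The remaining task is to see that such a family is unbounded in $\bb R^{(p+q)^2}$, contradicting compactness of $H$. This reduces to an elementary linear-algebra observation: elements of $\mathfrak p$ are symmetric matrices, since $\mathfrak p$ is the $-1$-eigenspace of $\theta(X) = -X^\T$. Hence a nonzero $P\in \mathfrak p$ is real-diagonalizable with at least one nonzero real eigenvalue $\lambda$, so $\exp(nP)$ has an eigenvalue $e^{n\lambda}$ that diverges as $n\to\pm\infty$. This forces $\|\exp(nP)\|\to\infty$, producing the desired contradiction.

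The only delicate point is legitimately invoking the uniqueness half of the Cartan decomposition $G = K\exp(\mathfrak p)$: the argument needs precisely the equivalence $h\in K \iff P = 0$, which is exactly what that uniqueness (cited by the text from Knapp) supplies. I would \emph{not} need to invoke Theorem 5.3 (Iwasawa-Malcev) for this lemma; on the contrary, the lemma is meant to provide the identification of a maximal compact subgroup on top of which Iwasawa-Malcev would complete the second standard calculation of $\pi_1(SO^+(p,q))$.
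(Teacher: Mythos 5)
Your proof is correct, but it takes a genuinely different route from the paper's. The paper proves maximality by a Haar-measure averaging argument (following Conrad's notes): given a compact $H$ with $O(p)\times O(q)\subseteq H\subseteq O(p,q)$, it averages to produce an $H$-invariant positive-definite inner product $\langle-,-\rangle_H$, builds the self-adjoint intertwiner $f$ between the indefinite form and $\langle-,-\rangle_H$, shows $f$ commutes with $H$ and is diagonalizable with real nonzero eigenvalues, decomposes $\bb R^{p+q}$ into an $H$-invariant positive part and an $H$-invariant negative part, and then conjugates $H$ into $O(p)\times O(q)$, finally descending to the special/identity-component case. Your argument instead leans on the global Cartan decomposition $G\cong K\times\mathfrak p$ via $(k,P)\mapsto k\exp(P)$: if some compact $H$ strictly contained $K$, a uniqueness argument would put a nonzero $P\in\mathfrak p$ into $H$, and since $P$ is a nonzero symmetric matrix the powers $\exp(nP)$ are unbounded, contradicting compactness. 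Both are valid; the trade-off is worth noting. Your argument is shorter and more transparent, but it re-imports the diffeomorphism $K\times\mathfrak p\xrightarrow{\sim}G$, which the paper explicitly flagged as the hard step of the \emph{first} method and cited wholesale from Knapp. The whole point of the paper's second route (Iwasawa--Malcev plus this lemma) is to offer a derivation that does not depend on that diffeomorphism, so the paper's Haar-measure proof keeps the two standard approaches logically independent, whereas yours collapses the second route back onto the machinery of the first. As a standalone proof of the lemma, though, your reasoning is sound: $\mathfrak p$ consists of symmetric matrices, a nonzero symmetric $P$ has a nonzero real eigenvalue (a symmetric nilpotent is zero), and $\|\exp(nP)\|\to\infty$ in the appropriate direction, so no compact subgroup of $G$ can strictly contain $K$ under the polar parametrization.
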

\begin{proof} First recall $SO(p)$ and $SO(q)$ are compact, so $SO(p) \times SO(q)\subset SO^+(p,q)$ is compact. To see that each such is maximal amongst compact subgroups, we follow \cite{conrad}.
\end{proof}

First a warm-up, whose result we will use later: 
\begin{lem} $SO(n)$ is a maximal compact subgroup of $SL(n)$.
\end{lem}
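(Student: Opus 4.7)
The plan is to invoke the polar decomposition $SL(n,\bb R) = SO(n)\cdot P$, where $P$ denotes the cone of symmetric positive-definite $n\times n$ matrices of determinant one. Every $g \in SL(n,\bb R)$ factors uniquely as $g = kp$ with $k \in SO(n)$ and $p \in P$; this can either be cited as standard or established in a few lines by taking $p = (g^\T g)^{1/2}$ (the square root defined via the spectral theorem for the positive-definite symmetric matrix $g^\T g$) and $k = g\inv{p}$, then verifying $k^\T k = I$ and $\det k = 1$. Note $\det p = 1$ is forced since $\det g = 1$ and $\det k = 1$.

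With this in hand, let $K \subset SL(n,\bb R)$ be any compact subgroup containing $SO(n)$; the aim is to show $K = SO(n)$. For arbitrary $g \in K$, I would write $g = kp$ as above. Since $k \in SO(n) \subset K$ and $K$ is a subgroup, $p = \inv{k}g \in K$ as well. The task then reduces to proving $p = I$, which would force $g = k \in SO(n)$ and hence the inclusion $K \subseteq SO(n)$; the reverse inclusion is assumed.

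To establish $p = I$, I would argue by contradiction. Were $p \neq I$, then, being symmetric positive-definite with $\det p = 1$, the matrix $p$ would have real eigenvalues $\lambda_1, \dots, \lambda_n > 0$ of product $1$, not all equal to $1$; hence some eigenvalue $\lambda > 1$ exists. All powers $p^m$ lie in $K$, yet $p^m$ remains symmetric with largest eigenvalue $\lambda^m$, so the operator norm satisfies $\|p^m\| \geq \lambda^m \to \infty$. This contradicts the boundedness of $K$ in $\bb R^{n^2}$ (compactness implying boundedness by the same Heine--Borel reasoning recalled in Lemma 5.2). The only substantive ingredient in the plan is the polar decomposition; the rest is a one-line eigenvalue-growth contradiction, so I do not anticipate any significant obstacle beyond citing or briefly reproducing that decomposition.
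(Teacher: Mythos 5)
Your proof is correct, but it takes a genuinely different route from the paper's. The paper first proves that $O(n)$ is maximal compact in $GL(n,\R)$ by an averaging argument: given a compact $K\supset O(n)$, average an arbitrary positive-definite inner product over $K$ using the normalized Haar measure to obtain a $K$-invariant inner product, conclude that $K$ lies in the orthogonal group of that form, and then use the fact that all such orthogonal groups are conjugate together with a dimension/component count to force $K = O(n)$; the $SO(n)\subset SL(n,\R)$ statement is then obtained \emph{mutatis mutandis}. You instead invoke the polar decomposition $SL(n,\R) = SO(n)\cdot P$ and a growth-of-eigenvalues contradiction: for $g\in K$, the positive part $p = k^{-1}g$ lands in $K$, and if $p\ne I$ it has an eigenvalue $\lambda>1$, so $\|p^m\|\ge\lambda^m\to\infty$, violating boundedness. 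Your argument is more elementary in that it dispenses with the Haar measure and the conjugacy step entirely, and it pins down $K$ directly rather than up to conjugation. The trade-off is that the paper's averaging technique is chosen deliberately so that it transfers almost verbatim to the harder Lemma that $SO(p)\times SO(q)$ is maximal compact in $SO^{+}(p,q)$, where one must compare a $K$-invariant \emph{definite} form against the ambient \emph{indefinite} one; your polar-decomposition route would require redeveloping the Cartan decomposition in that setting rather than reusing the same machinery. Both proofs are sound; the paper's buys reusability at the cost of invoking more theory.
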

\begin{proof} We first show the ``larger'' result, that $O(n)$ is a maximal compact subgroup of $GL(n)$. We take this approach to emphasize that the ``speciality'' of $SO(n)$ and $SL(n)$ is mostly irrelevant to maximal compactness. The properties that are essential here are those of orthogonality and invertibility.

Let $K$ be a compact subgroup of $GL(n)$ containing $O(n)$. Because $K$ is compact, we can consider the (normalized) Haar measure on $K$ (the existence of such is a result of basic Lie theory, proven for example in \cite{lee}), $dk$, and define a positive definite inner product on $\bb R^n$ by first choosing an arbitrary positive definite inner product $\langle-,-\rangle$, and defining the desired inner product by the formula
\[
\langle x,y\rangle_K :=\int_K\langle kx,ky\rangle dk
\]
which is manifestly $K$-invariant (the finiteness of this integral/existence of Haar measure is why $O(n)$ is only maximal amongst \textit{compact} subgroups). Therefore $K$ preserves this new inner product, so that $K$ is a subgroup of some special orthogonal group corresponding to this new inner product. Then $K$ must be conjugate to a subgroup of $O(n)$, since the set of all orthogonal groups indexed by positive-definite inner products form a single conjugacy class. This is a standard result in linear algebra, but the spirit of the proof is that all positive definite inner products can be brought into the standard form by change of coordinates, for example applying Gram-Schmidt and Sylvester's law of inertia. Therefore we have inclusions 
\[
g\cdot O(n) \cdot \inv g \subset g \cdot K \cdot \inv g \subset O(n)
\] 
but $g \cdot O(n) \cdot \inv g = O(n)$, since for example they have the same dimension and number of connected components. Thus $g\cdot K \cdot \inv g = O(n)\Rightarrow K = g \cdot O(n) \cdot \inv g = O(n)$. We have shown $K \supset O(n) \Rightarrow K = O(n)$, so $O(n)$ is maximal among compact subgroups.

If one runs this proof again for $SL(n)$ mutatis mutandis, one shows that $SO(n)$ is a maximal compact subgroup. 
\end{proof}
Now we return to the original lemma. 
\begin{lem}
$SO(p) \times SO(q)$ is a maximal compact subgroup of $SO^+(p,q)$.
\end{lem}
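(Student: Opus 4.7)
The plan is to imitate the Haar-averaging argument used for the warmup lemma $SO(n)\subset SL(n)$, but to strengthen its conclusion using the $SO(p)\times SO(q)$-invariance of the averaged inner product. Let $K\subset SO^+(p,q)$ be a compact subgroup containing $SO(p)\times SO(q)$; I aim to show $K=SO(p)\times SO(q)$. First I would take any positive definite inner product on $\bb R^{p+q}$ and average it against the normalized Haar measure on $K$ to obtain a $K$-invariant positive definite inner product $\langle-,-\rangle_K$.

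The key step, which is the new ingredient relative to the warmup, is to pin $\langle-,-\rangle_K$ down as a block form $a\langle-,-\rangle_p+b\langle-,-\rangle_q$ with $a,b>0$, using that $SO(p)\times SO(q)\subset K$ also preserves it. Assume first $p,q\geq 2$. To see that $\bb R^p$ and $\bb R^q$ are $\langle-,-\rangle_K$-orthogonal, I would fix $v\in\bb R^p$ and observe that $w\mapsto\langle v,w\rangle_K$ is an $SO(q)$-invariant linear functional on $\bb R^q$, which must vanish because $SO(q)$ acts transitively on the unit sphere. The restrictions $\langle-,-\rangle_K|_{\bb R^p}$ and $\langle-,-\rangle_K|_{\bb R^q}$ are $SO(p)$- and $SO(q)$-invariant positive definite forms, and the same transitivity forces each to be a positive scalar multiple of the standard inner product.

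Having fixed the shape of $\langle-,-\rangle_K$, I would then introduce the self-adjoint operator $T$ on $(\bb R^{p+q},\langle-,-\rangle_K)$ determined by $I_{p,q}(v,w)=\langle Tv,w\rangle_K$; in the standard basis, $T=\text{diag}(1/a,\dots,1/a,-1/b,\dots,-1/b)$, so its two eigenspaces are exactly $\bb R^p$ and $\bb R^q$. Since $K$ preserves both $I_{p,q}$ and $\langle-,-\rangle_K$, it commutes with $T$ and so preserves each eigenspace, forcing $K\subset O(p)\times O(q)$. Intersecting this with the identity component $SO^+(p,q)$ yields $K\subset SO(p)\times SO(q)$, matching the reverse containment; here one uses that a block diagonal $\text{diag}(A,B)$ lies in $SO^+(p,q)$ iff it preserves orientation on both the positive- and negative-definite subspaces, i.e.\ iff $\det A=\det B=1$.

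The hard part will be the low-dimensional degeneracies where the irreducibility input is weakened. For $p=1,q\geq 2$ (or symmetrically $q=1,p\geq 2$), the cross-term vanishing and the rigidity of the form on the nontrivial factor still go through using only the single nontrivial $SO$-factor, so the argument is essentially unchanged. The genuinely degenerate case is $p=q=1$, where $SO(1)\times SO(1)$ is trivial and the representation-theoretic step is vacuous; there I would argue directly that $SO^+(1,1)\cong\bb R$ as topological groups (parametrized by the boost $\begin{pmatrix}\cosh t & \sinh t\\ \sinh t & \cosh t\end{pmatrix}$) admits no nontrivial compact subgroup, so any compact $K$ automatically equals $\{\mathrm{Id}\}=SO(1)\times SO(1)$.
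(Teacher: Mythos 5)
Your proposal is correct, and it takes a noticeably different route to the same conclusion. The paper first proves the $O(p,q)/O(p)\times O(q)$ statement and only descends to $SO^+$ at the very end; after introducing the self-adjoint map relating the two bilinear forms, it decomposes $\bb R^n$ into sums of positive- and negative-eigenvalue eigenspaces of arbitrary dimensions, argues $Q$ is definite of the right signature on each piece, and then invokes the fact that orthogonal groups of a fixed signature $(p,q)$ form a single conjugacy class to get $K$ conjugate to (hence, after a dimension count, equal to) $O(p)\times O(q)$. You instead exploit the given containment $SO(p)\times SO(q)\subset K$ as representation-theoretic input on the averaged form itself: the transitivity of $SO(p)$ and $SO(q)$ on their respective unit spheres (for $p,q\ge 2$) forces $\langle-,-\rangle_K$ to be block-diagonal of the form $a\langle-,-\rangle_p\oplus b\langle-,-\rangle_q$, so your self-adjoint operator $T$ has eigenspaces that are \emph{literally} the coordinate subspaces $\bb R^p$ and $\bb R^q$. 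This gives $K\subset O(p)\times O(q)$ in the original basis with no conjugacy argument, and intersecting with $SO^+(p,q)$ (using the block-determinant criterion $\det A=\det B=1$) finishes. The trade-off is clean: the paper's eigenspace-plus-conjugacy route is uniform in $p,q$ and does not need irreducibility, whereas your pinning-down step degenerates when $p$ or $q$ is $0$ or $1$; you correctly flag that $p=1,q\ge2$ still goes through on the nontrivial factor alone and that $p=q=1$ needs the separate observation that $SO^+(1,1)\cong\bb R$ has no nontrivial compact subgroups. Both proofs are valid; yours is more self-contained in that it avoids the conjugacy-class input, at the cost of the low-dimensional case analysis.
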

\begin{proof}Again we begin by proving $O(p) \times O(q)$ is a maximal compact subgroup of $O(p,q)$, defined in the expected way. Suppose we have $K$ compact with $O(p) \times O(q) \subset K\subset O(p,q)$. In particular, the action of $K$ on $\bb R^n$ preserves the indefinite inner product (the standard one of signature $(p,q)$.), which we denote as $Q$. $Q$ has an associated non-degenerate symmetric bilinear form given by 
\[
\langle x,y\rangle = \sum_{i=1}^p x_iy_i - \sum_{j=p+1}^{p+q}x_jy_j
\]
inducing an isomorphism $\bb R^n \to (\bb R^n)^*$ by the map $v \mapsto \langle v,-\rangle$.

Since $K$ is a compact subgroup of $GL(p+q)$, following from our proof of the lemma above (the definite case), $K$ is contained in $O(Q')$ for $Q'$ some $K$-invariant positive definite quadratic form with associated $K$-invariant positive definite inner product $\langle-,-\rangle_K$\footnote{Note that at this point in the above proposition, we were basically done, since we only needed to produce a positive definite $K$-invariant inner product. Here we seek an indefinite inner product which cannot possibly arise directly from this construction, so there is more work to do.}. Thus there is another isomorphism $v \mapsto \langle v, -\rangle_K$, and thus a composite isomorphism $f: \bb R^n \to \bb R^n$, where $f(v)$ is the unique vector such that $\langle v, -\rangle = \langle f(v),-\rangle_K$. 
\end{proof}
\begin{lem}
$f$ is self-adjoint with respect to $\langle -,-\rangle_K$.
\end{lem}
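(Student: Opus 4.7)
The plan is to unpack the definition of $f$ and compare the two expressions $\langle f(u), v\rangle_K$ and $\langle u, f(v)\rangle_K$ directly, using nothing more than the symmetry of each of the two bilinear forms.

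First, I would record the defining property of $f$ in the form that will actually be used: for every $u, w \in \bb R^n$,
\[
\langle f(u), w\rangle_K \;=\; \langle u, w\rangle.
\]
This is just the statement that $f(u)$ is the element representing the functional $\langle u, -\rangle$ with respect to the inner product $\langle -,-\rangle_K$. Specializing $w = v$ immediately gives $\langle f(u), v\rangle_K = \langle u, v\rangle$.

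Next, I would swap the roles of $u$ and $v$ in the same identity, getting $\langle f(v), u\rangle_K = \langle v, u\rangle$. Now I invoke symmetry twice: the left-hand side equals $\langle u, f(v)\rangle_K$ because $\langle -,-\rangle_K$ is symmetric, and the right-hand side equals $\langle u, v\rangle$ because the original form $\langle -,-\rangle$ of signature $(p,q)$ is symmetric. Combining with the previous step,
\[
\langle f(u), v\rangle_K \;=\; \langle u, v\rangle \;=\; \langle u, f(v)\rangle_K,
\]
which is exactly the self-adjointness of $f$ with respect to $\langle -,-\rangle_K$.

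There is no real obstacle here: the lemma is a formal consequence of two facts, that $f$ is the Riesz-style representative of $\langle v, - \rangle$ in the positive definite inner product $\langle -,-\rangle_K$, and that both bilinear forms in sight are symmetric. No use is made of compactness of $K$, of the signature $(p,q)$, or of non-degeneracy beyond what is implicit in the existence of $f$. The mild care required is simply to keep the two inner products notationally distinct, since the argument consists of rewriting the same scalar $\langle u, v\rangle$ in two different ways.
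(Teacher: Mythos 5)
Your argument is correct and is essentially identical to the paper's own proof: both unpack the defining identity $\langle f(v), w\rangle_K = \langle v, w\rangle$ and then use the symmetry of $\langle -,-\rangle_K$ and of $\langle -,-\rangle$ to conclude $\langle f(v), w\rangle_K = \langle v, f(w)\rangle_K$. The only difference is cosmetic (variable names and the order in which the symmetries are invoked).
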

\begin{proof}By construction ($f$ is the map such that this holds)
\[
\langle f(v),w\rangle_K = \langle v,w\rangle
\]
Similarly 
\[
\langle v,f(w)\rangle_K = \langle f(w),v\rangle_K = \langle w,v\rangle = \langle v,w\rangle
\]
as required.
\end{proof}
Thus $f$ has all real and strictly positive eigenvalues, and by the spectral theorem for real matrices, $f$ is diagonalizable. $f$ commutes with $K$, $kf(v) = f(kv)$: This is because $f(kv)$ is the unique element of $\R^{n}$ satisfying
\[
\langle kv,-\rangle = \langle f(kv),-\rangle_K
\]
but
\[
\langle kv,-\rangle = k\langle v,-\rangle = k\langle f(v),-\rangle_K = \langle kf(v),-\rangle_K
\]
so by uniqueness
\[
f(kv) = kf(v)
\]
so that for any $k \in K$, $f$ and $k$ are commuting linear maps with $f$ diagonalizable, so that $K$ preserves each $f$-eigenspace. If $v\ne 0 \in V_{\lambda}^{f}$, 
\[
\lvert v \rvert ^{2} \equiv \langle v,v\rangle = \langle f(v),v\rangle_{K} = \langle \lambda v,v\rangle_{K} \equiv \lambda \lvert v \rvert_{K}^{2}
\]
which shows that $q$ is definite when restricted to $V_{\lambda}^{f}$, with sign equal to that of $\lambda$ (recall $\lambda$ is real and non-zero). Then there is a decomposition
\[
\bb R^n \cong \left(\bigoplus_{\lambda > 0} V_{\lambda}^{f} \right) \bigoplus \left(\bigoplus_{\mu<0} V_{\mu}^{f}\right)
\]
as positive eigenvalue eigenspaces and negative eigenvalue eigenspaces, with $Q$ positive definite on the first and negative definite on the second. There can be no 0-eigenspace since $f$ is an isomorphism. Since $K$ preserves each eigenspace, $K$ preserves the first and second components, so that $\bb R^{n}$ is a direct sum of a space on which $Q$ is positive definite and a space on which $Q$ is negative definite. Again by Sylvester's theorem, that implies the dimension of the first component is $p$ and the dimension of the second component is $q$. Thus $K$ fixes an indefinite form of signature $(p,q)$. Similar to the argument above, the indefinite orthogonal groups indexed by forms of signature $(p,q)$ all belong to one conjugacy class\footnote{To do this carefully, one performs Gram-Schmidt on both portions simultaneously in a ``compatible'' way. The details are straightforward and left to the reader, or can be checked in \cite{conrad}.}, so that $K$ is conjugate to a subgroup of $O(p) \times O(q)$, and is thus equal to $O(p) \times O(q)$, so $O(p) \times O(q)$ is maximal compact in $O(p,q)$.\\\\
\begin{proof}[Proof of main result] To promote $O(p,q)$ to $SO(p,q)$, we note that the candidate subgroups $O(p) \times O(q)$ do not belong to $SO(p) \times SO(q)$. The underlying embedding is the map $(A,A') \in O(p) \times O(q) \mapsto \text{diag}(A,A')$, which clearly always belongs to $O(p,q)$, but not necessarily $SO(p,q)$ unless $\det A = \det A'$. Thus our candidate subgroup is 
\[
K_{p,q} :=\{(A,A') \in O(p) \times O(q) \mid \det A = \det A' \}
\]
and the previous arguments all go through mutatis mutandis to show that $K_{p,q}$ is a maximal compact subgroup of $SO(p,q)$. Then $K_{p,q}^{+}$ is a maximal compact subgroup of $SO^{+}(p,q)$. It is easy to see that $K_{p,q}^{+} = SO(p) \times SO(q)$, by continuity of the determinant and the observation $(Id_{p},\ Id_{q}) \in K_{p,q}^{+}$.
\end{proof}
\newpage

\end{document}